\def\cal{\mathcal}
\newcommand{\cQ}{\mathcal{Q}}
\newcommand{\Z}{\mathcal{Z}}
\newcommand{\W}{\mathcal{W}}
\DeclareMathOperator{\id}{id}
\newcommand{\norm}[1]{{\left\lVert #1\right\rVert}}
\theoremstyle{plain}
\newtheorem{main}{Theorem}
\newtheorem{mcor}[main]{Corollary}
\newtheorem{thm}{Theorem}[section]
\newtheorem*{thm*}{Theorem}
\newtheorem{lem}[thm]{Lemma}
\newtheorem{prop}[thm]{Proposition}
\newtheorem{cor}[thm]{Corollary}
\theoremstyle{definition}
\newtheorem{defn}[thm]{Definition}
\newtheorem*{defn*}{Definition}
\newtheorem{remark}[thm]{Remark}
\newtheorem*{question*}{Question}
\title{Embeddings into the ultrapower of the Jiang-Su algebra}
\author{Ben Bouwen}
    \address{Department of Mathematics and Computer Science, University of Southern Denmark, Odense, Denmark}
    \email{bouwen@imada.sdu.dk}
\author{Jennifer Pi}
    \address{Department of Mathematics, University of Oxford, Oxford, United Kingdom}
    \email{jennifer.pi@maths.ox.ac.uk}
\thanks{This work was supported by DFF grant 1054-00094B, and by the the ERC grant EP/X026647/1.}
        \subjclass[2020]{46L35}
\begin{document}

\begin{abstract}
    We study existence of embeddings into ultrapowers of the Jiang-Su algebra $\Z$ and the Razak-Jacelon algebra $\W$. More specifically, we show that the cone over any separable $C^*$-algebra embeds into the ultrapowers of both $\Z$ and $\W$. We also show that the result for $\Z$ generalizes to separable and exact continuous fields of $C^*$-algebras for which one of the fibers embeds into the ultrapower of $\Z$, if this fiber is suitably well-behaved.
\end{abstract}

\maketitle

\section{Introduction}

Questions of embeddability into ultrapowers within operator algebras are ubiquitous, with the most famous being the Connes embedding problem: does every II$_1$ factor embed into an ultrapower of the hyperfinite II$_1$ factor?
A naive $C^*$-analog of this question, regarding embeddings of $C^*$-algebras into an ultrapower of the Jiang-Su algebra $\Z$, can be quickly decided: any $C^*$-algebra with multiple non-zero projections cannot embed into $\Z_\omega$, since the latter is projectionless. 
However, the question of characterizing which $C^*$-algebras do admit embeddings into $\Z_\omega$ remains open and difficult. 

In this brief paper, we make a few observations on embeddability of various $C^*$-algebras into the ultrapower of $\Z$ and of the Razak-Jacelon algebra $\W$, a stably projectionless $C^*$-algebra with trivial $K$-theory constructed in \cite{Jacelon}. The first asserts existence of an embedding for cones over separable $C^*$-algebras:

\begin{main}
    The cone over any separable $C^*$-algebra embeds into $\Z_\omega$ and into $\W_\omega$.
\end{main}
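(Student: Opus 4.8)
My plan is to reduce the statement to producing a single \emph{faithful} completely positive contractive (c.p.c.) order zero map $A\to\Z_\omega$ (and likewise $A\to\W_\omega$), and then to build such a map by softening the matricial approximations furnished by quasidiagonality of the cone.

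\emph{Reduction.} By the Winter--Zacharias structure theorem, c.p.c.\ order zero maps $\phi\colon A\to B$ correspond bijectively to $*$-homomorphisms $\rho\colon C_0((0,1])\otimes A\to B$ via $\phi(a)=\rho(h_0\otimes a)$, where $h_0(t)=t$; if $\phi=h_\phi\pi_\phi$ denotes the associated structure data, then $\rho$ is injective exactly when $\pi_\phi$ is faithful and the spectrum of $h_\phi$ is $[0,1]$. Since $CA=C_0((0,1])\otimes A$ and $CA$ is an ideal in $C(A^{\sim})=C_0((0,1])\otimes A^{\sim}$, I may assume $A$ is unital. It then suffices to construct a c.p.c.\ order zero map $\psi\colon A\to\Z_\omega$ that is \emph{isometric} --- which forces $\pi_\psi$ faithful and $\|h_\psi\|=1$ --- and whose structure element $h_\psi=\psi(1)$ has full spectrum $[0,1]$.

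\emph{Quasidiagonality of the cone.} The cone $CA$ is contractible, hence homotopy equivalent to $0$; by Voiculescu's theorem that quasidiagonality is a homotopy invariant of separable $C^*$-algebras, $CA$ is quasidiagonal. I therefore obtain c.p.c.\ maps $\theta_k\colon CA\to M_{n_k}$ that are asymptotically multiplicative and asymptotically isometric. Let $\iota\colon A\to CA$, $\iota(a)=h_0\otimes a$, be the universal (isometric) order zero map; since $\iota$ sends orthogonal positive elements of $A$ to exactly orthogonal elements of $CA$, the composites $\phi_k=\theta_k\circ\iota\colon A\to M_{n_k}$ are asymptotically order zero, and $\|\phi_k(a)\|\to\|\iota(a)\|=\|a\|$.

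\emph{Softening into the target.} The essential obstacle is that $\Z_\omega$ is projectionless, so no nonzero homomorphism of a matrix algebra lands in it and the matricial data cannot be transported directly. Instead I route it through order zero maps, exploiting the extra $(0,1]$-direction of the cone. For each $n$ there is an injective $*$-homomorphism $C_0((0,1])\otimes M_n\hookrightarrow\Z$ (and $\hookrightarrow\W$), since $\Z$ and $\W$ contain cones over matrix algebras with a full-spectrum structure element; precomposing with the universal order zero map of $M_n$ yields isometric c.p.c.\ order zero maps $\sigma_k\colon M_{n_k}\to\Z$. Put $\psi_k=\sigma_k\circ\phi_k\colon A\to\Z$. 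Because $\sigma_k$ is genuinely order zero it carries approximately orthogonal positive elements to approximately orthogonal ones, so $\psi_k$ is again asymptotically order zero; and because $\sigma_k$ is isometric, $\|\psi_k(a)\|=\|\phi_k(a)\|\to\|a\|$.

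\emph{Conclusion.} Passing to the limit along $\omega$, the sequence $(\psi_k)$ defines a c.p.c.\ order zero map $\psi\colon A\to\Z_\omega$ that is isometric; asymptotic multiplicativity and isometry applied to the central element $h_0\otimes 1$ make the spectrum of $\psi(1)$ fill $[0,1]$, so the associated $*$-homomorphism $CA\to\Z_\omega$ is injective --- an embedding. Running the same argument with $\W$ in place of $\Z$ gives the embedding into $\W_\omega$. I expect the genuine difficulty to be precisely the reconciliation in the third step: a non-exact cone cannot embed into the exact algebra $\Z$, so any embedding must arise from finite-dimensional (hence exact) approximations that cohere only in the ultrapower, and projectionlessness of $\Z$ forces the replacement of multiplicative matrix approximations by order zero ones. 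The delicate bookkeeping is to preserve norms (asymptotic isometry) and to recover honest injectivity (full spectrum of $h_\psi$) through this softening.
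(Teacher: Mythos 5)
Your construction assembles the same ingredients as the paper's proof (quasidiagonality of cones, cones over matrix algebras sitting inside $\Z$ and $\W$, an ultrapower assembly), but it funnels everything through an injectivity criterion that is false, and that criterion is exactly where the real content lies. You claim that for a c.p.c.\ order zero map $\phi$ with structure decomposition $\phi = h_\phi\pi_\phi$, the associated $*$-homomorphism $\rho_\phi\colon CA\to B$ is injective as soon as $\pi_\phi$ is faithful and $\mathrm{sp}(h_\phi)=[0,1]$ (and, in your conclusion, as soon as $\psi$ is isometric and $\psi(1)$ has full spectrum). Here is a counterexample: take $A=\mathbb{C}^2$, $B=C_0((0,1])\oplus\mathbb{C}$, and $\phi(a_1,a_2)=(a_1h_0,a_2)$, where $h_0(t)=t$. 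This map is c.p.c.\ order zero and isometric, $\pi_\phi$ is faithful, and $\mathrm{sp}(\phi(1,1))=[0,1]\cup\{1\}=[0,1]$; yet the associated homomorphism satisfies $\rho_\phi(g\otimes(0,1))=(0,g(1))$, so it kills every $g\otimes(0,1)$ with $g(1)=0$ and is not injective. The point is that injectivity of $\rho_\phi$ requires $g(h_\phi)\pi_\phi(a)\neq 0$ for \emph{every} nonzero $g\in C_0((0,1])$ and nonzero $a\in A$, i.e.\ the spectrum of $h_\phi$ must be full \emph{relative to each positive element of $A$ separately}, and neither isometry of $\phi$ nor fullness of $\mathrm{sp}(h_\phi)$ controls this localization. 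Since both your reduction step and your final step rest on this criterion, the proof as written does not establish an embedding.

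The gap is repairable, but the repair is essentially the paper's proof: rather than softening to order zero maps and trying to recover injectivity spectrally, one keeps honest injective $*$-homomorphisms throughout. Quasidiagonality gives an embedding $CA\hookrightarrow\cQ_\omega$; tensoring with $C_0((0,1])$ gives $C(CA)\hookrightarrow C(\cQ_\omega)$; the reparametrization $f\mapsto\bigl((t,s)\mapsto f(ts)\bigr)$ embeds $CA$ into its double cone $C(CA)$; and $C(\cQ_\omega)\subseteq(C\cQ)_\omega$ maps into $\Z_\omega$ (resp.\ $\W_\omega$) via an embedding $C\cQ\hookrightarrow\Z$ (resp.\ $\W$). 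Every arrow is an injective homomorphism, so injectivity of the composite is automatic; this double-cone factorization is what replaces your spectral criterion. (In fact, if you unwind your $\psi$ in coordinates and use asymptotic multiplicativity of the $\theta_k$, the homomorphism $\rho_\psi$ agrees along $\omega$ with exactly this composite, so your map is injective --- but proving that requires the factorization, not the spectral condition.) Separately, note that you assert without proof that $\W$ contains cones over matrix algebras: for $\Z$ this follows from the dimension drop picture of R\o rdam--Winter, but for $\W$ the paper needs Jacelon's classification lemma together with a one-sided intertwining argument, so this ingredient cannot simply be taken for granted.
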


This statement is obtained as a consequence of Proposition \ref{cone over Q_omega}. In the latter, we construct embeddings of the cone over the universal UHF algebra $\cQ$ into $\Z$ and $\W$, which are used to embed the cone over $\cQ_\omega$ into $\Z_\omega$ and $\W_\omega$, respectively. The statement above then follows by combining quasidiagonality of arbitrary cones over separable $C^*$-algebras \cite{Voi1991QD} with an embedding of the cone into the double cone.

Combining Proposition \ref{cone over Q_omega} with the results in \cite{gabe2020traceless}, we add $\Z_\omega$- and $\W_\omega$-embeddability to the following list of equivalences for separable, exact, traceless (in the sense of \cite{gabe2020traceless}) $C^*$-algebras.

\begin{mcor}
    For $A$ separable, exact, and traceless, the following are equivalent:
    \begin{enumerate}
        \item $A$ embeds into the cone over $\cal{O}_2$,
        \item $A$ is quasidiagonal.
        \item $A$ is AF-embeddable.
        \item The primitive ideal space of $A$ has no non-empty compact open subsets.
        \item $A$ is stably finite.
        \item $A$ is stably projectionless.
        \item $A$ embeds into the trace-kernel ideal of $\Z_\omega$.
        \item $A$ embeds into the trace-kernel ideal of $\W_\omega$.
    \end{enumerate}    
\end{mcor}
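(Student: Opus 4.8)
The plan is to build on \cite{gabe2020traceless}, which for separable, exact, traceless $A$ already establishes the equivalence of conditions (1)--(6) (this is where exactness enters). It then remains only to insert (7) and (8) into this equivalence class, and I would do so by proving, for each of $\Z$ and $\W$ separately, the implication \emph{from} (1) together with an implication \emph{back into} the list, say to (6).

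For (1) $\Rightarrow$ (7): if $A$ embeds into the cone $C\mathcal{O}_2$ over $\mathcal{O}_2$, then since $\mathcal{O}_2$ is separable, Theorem A (obtained from Proposition~\ref{cone over Q_omega}) provides an embedding $C\mathcal{O}_2 \hookrightarrow \Z_\omega$, and composing embeds $A$ into $\Z_\omega$. The limit trace $\tau_\omega$ on $\Z_\omega$ is a tracial state, so its restriction to the image of $A$ is a bounded trace on $A$; since $A$ is traceless this restriction vanishes, and hence $\tau_\omega(a^*a)=0$ for every $a\in A$, placing $A$ inside the trace-kernel ideal $J_\Z$. The implication (1) $\Rightarrow$ (8) is identical in spirit, now using the embedding $C\mathcal{O}_2 \hookrightarrow \W_\omega$ from Theorem A.

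For the two return implications I would show that $J_\Z$ and $J_\W$ are stably projectionless; since stable projectionlessness passes to subalgebras, any $A$ that embeds into them satisfies (6), closing the loop through \cite{gabe2020traceless}. For $\W$ this needs no reference to traces: $\mathcal{W}$ is stably projectionless, and because a self-adjoint near-projection in any $C^*$-algebra lies within a universal distance of an honest projection, $M_n(\mathcal{W})$ having no nonzero projections forces the same for $M_n(\W_\omega)$, hence for $J_\W$. For $\Z$, which is not stably projectionless, I would instead use $K_0(\Z)\cong\mathbb{Z}$ with the trace sending the class of the unit to $1$, so that projections in $M_n(\Z)$ have trace in $\mathbb{Z}_{\ge 0}$, a set whose nonzero values are bounded away from $0$. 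A projection in $M_n(J_\Z)$ is represented by near-projections whose traces tend to $0$ along $\omega$; approximating these by genuine projections and invoking faithfulness of the trace on $M_n(\Z)$ shows they must vanish, so $M_n(J_\Z)$ is projectionless for every $n$.

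The step I expect to require the most care is the $\W$-version of the forward implication, because $\mathcal{W}$ is nonunital with an unbounded, densely defined trace: one must fix the correct formulation of the limit trace on $\W_\omega$ and of its kernel ideal $J_\W$ within the framework of \cite{gabe2020traceless}, and then verify that a traceless subalgebra such as the image of $C\mathcal{O}_2$ genuinely lands in $J_\W$, since the clean ``restrict a bounded tracial state'' argument available for $\Z$ is not directly at hand. By contrast, the permanence property that stable projectionlessness passes to subalgebras and the near-projection functional calculus estimate are routine.
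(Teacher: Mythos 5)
Your proposal is correct, and its skeleton coincides with the paper's: conditions (1)--(6) are quoted from \cite{gabe2020traceless}, the forward implications come from the embedding of $C\mathcal{O}_2$ into $\Z_\omega$ resp.\ $\W_\omega$ (Corollary \ref{cone over separable}), and a return implication into the list (1)--(6) closes the loop. The genuine divergence is in the return step. The paper proves (7),(8) $\Rightarrow$ (5): it cites \cite[\S 3.6(d)]{farah2021model} for the fact that $\Z_\omega$ (resp.\ $\W_\omega$) is stably finite, and then uses that stable finiteness passes to $C^*$-subalgebras. You instead prove (7),(8) $\Rightarrow$ (6) by hand: stable projectionlessness of $\W_\omega$ via the near-projection perturbation argument, and projectionlessness of $M_n(J_\Z)$ via quantization of traces of projections (the trace implements $K_0(\Z)\cong\mathbb{Z}$, so a nonzero projection in $M_n(\Z)$ has normalized trace at least $1/n$) combined with faithfulness of the trace on $M_n(\Z)$. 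Both arguments are sound; yours is more self-contained and elementary, and it correctly recognizes the asymmetry that forces you to work inside $J_\Z$ rather than $\Z_\omega$ (which is unital, hence certainly not stably projectionless), whereas the paper's is shorter at the cost of leaning on the model-theoretic literature. For the forward step your variation is cosmetic: the paper applies the second half of Corollary \ref{cone over separable} to $C\mathcal{O}_2$ (which is traceless), while you restrict the limit trace to the image of $A$ and invoke tracelessness of $A$ itself; this is the same mechanism, and if anything yours avoids having to observe that $C\mathcal{O}_2$ is traceless.

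One correction: the difficulty you flag in the $\W$ case is not actually present. $\W$ is monotracial, with a unique \emph{bounded} tracial state --- this is precisely the trace $\tau$ used in the proof of Proposition \ref{cone over Q_omega}, and the paper's definition of the trace-kernel ideal $J_B$ presupposes $T(B)\neq\emptyset$. So the limit trace on $\W_\omega$ is an honest tracial state, and the ``restrict a bounded tracial state'' argument you run for $\Z$ applies verbatim to $\W$; no densely defined unbounded trace needs to be handled.
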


Our second result generalizes Theorem A to continuous fields of $C^*$-algebras under some additional conditions:

\begin{main}
    Let $E$ be a continuous field of $C^*$-algebras over a connected, compact, metrizable space $X$. Suppose $E$ is separable and exact, and one of the fibers is simple, nuclear and $\Z_\omega$-embeddable. If this fiber is non-unital, or if $E$ is unital, then $E$ is $\Z_\omega$-embeddable. In the latter case, if the embedding of the fiber is unital, then so is the embedding of $E$.
\end{main}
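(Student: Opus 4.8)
The plan is to exhibit an injective $*$-homomorphism $E \hookrightarrow \Z_\omega$ by decomposing the target along its trace-kernel extension and matching the pieces to the tracial and traceless parts of $E$. Recall that $\Z$ is monotracial and that its trace GNS representation generates the hyperfinite $\mathrm{II}_1$ factor $R$; consequently the trace-kernel ideal $J \trianglelefteq \Z_\omega$ sits in a short exact sequence $0 \to J \to \Z_\omega \xrightarrow{q} R^\omega \to 0$, where $R^\omega$ is the tracial ultrapower of $R$ and $J$ is traceless. I would regard $E$ as a $C(X)$-algebra with evaluation maps $\pi_x : E \to E_x$, and write $F = E_{x_0}$ for the distinguished fiber together with its embedding $\phi : F \hookrightarrow \Z_\omega$.

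First I would produce the tracial part. Since $F$ is simple and nuclear, the traces it carries are amenable, and composing with $\pi_{x_0}$ transports this tracial data to $E$; together with the traces supplied by $\phi$ this should yield a $*$-homomorphism $E \to R^\omega$ whose kernel is the largest traceless ideal $E_0 \trianglelefteq E$, with $E/E_0$ faithfully tracially represented, so that the map is injective modulo $E_0$. In the unital case I would arrange $E \to R^\omega$ to be unital, so that unitality of $\phi$ propagates to the final embedding.

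Next I would embed the traceless part. The ideal $E_0$ is separable, exact, and traceless, so by the equivalences of the corollary above it embeds into the trace-kernel ideal $J$ provided its primitive ideal space has no non-empty compact open subset. This is exactly where connectedness of $X$ enters: just as $C(X)$ is projectionless when $X$ is connected, the $C(X)$-algebra structure forces $\Prim(E)$, and hence $\Prim(E_0)$, to have no non-empty compact open subsets precisely under the stated hypotheses. This also explains the case distinction: a unital fiber inside a non-unital $E$ would make $\{x_0\}$ the support of an open, compact projection and thus a non-empty compact open subset of $\Prim(E)$, obstructing embeddability, whereas a non-unital fiber, or a unital $E$, avoids this. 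The resulting embedding $E_0 \hookrightarrow J$ should be obtained from the cone constructions of Theorem~A and Proposition~\ref{cone over Q_omega}, via the results of \cite{gabe2020traceless}.

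Finally I would assemble the two pieces. Given the extension $0 \to J \to \Z_\omega \to R^\omega \to 0$, the traceless embedding $E_0 \hookrightarrow J$, and the tracial map $E \to R^\omega$, I would lift the latter to a map $E \to \Z_\omega$ compatible with the former, using the lifting properties afforded by nuclearity of $F$ together with $\phi$ as a boundary condition, and deduce injectivity from injectivity of $E \to R^\omega$ modulo $E_0$ and of $E_0 \hookrightarrow J$. The main obstacle is exactly this assembly inside the projectionless algebra $\Z_\omega$: because $\Z$ has no non-trivial projections one cannot form an orthogonal direct sum of the tracial and traceless maps, so the gluing must be carried out through the trace-kernel extension, and verifying that the lift stays injective while respecting the $C(X)$-structure at $x_0$ is the crux of the argument.
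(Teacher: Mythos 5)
Your proposal takes a genuinely different route from the paper --- decomposing $\Z_\omega$ along its trace-kernel extension rather than working with the fiber extension of $E$ --- but it has a fatal gap at exactly the step you call the crux, and two of its intermediate claims are false. The assembly step is not a verification to be postponed: given the extension $0 \to E_0 \to E \to E/E_0 \to 0$ together with embeddings $E_0 \hookrightarrow J$ and $E/E_0 \to R^\omega$, the existence of a compatible $*$-homomorphism $E \to \Z_\omega$ is an extension problem governed by the Busby invariant, and it can simply fail; solving it \emph{is} the content of the theorem. Nuclearity of the fiber $F$ does not help here, since the map to be lifted is defined on $E$, which is only exact, and Choi--Effros-type lifting produces cpc lifts rather than $*$-homomorphisms. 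The paper's proof (Theorem \ref{thm:extensionsembed}) consists precisely of the machinery needed to overcome this obstruction, applied to the fiber extension $0 \to I \to E \to D \to 0$ with $D = E_{x_0}$: it pushes $I$ into a stable, $\cal O_2$-stable hereditary subalgebra $J$ of R\o rdam's algebra $\cal A_{[0,1]}$, proves that the comparison extension $0 \to J \otimes D \to J^\dagger \otimes D \to D \to 0$ is purely large (Lemma \ref{lemma:unitizationpurelylarge}), hence absorbing (resp.\ unitally absorbing --- this is where the unital/non-unital dichotomy actually comes from), and then uses $\mathrm{Ext}(D,J) \cong KK^1(D,J) = 0$, plus Manuilov--Thomsen in the unital case, to identify the pushforward of $\mathfrak e$ with this comparison extension, whose algebra $J^\dagger \otimes D$ embeds into $(\Z \otimes \Z)_\omega \cong \Z_\omega$ by Proposition \ref{prop: embedding tensor product}. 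All of this relies crucially on the quotient being the simple, nuclear fiber $D$; in your decomposition the quotient $E/E_0$ is in general neither simple nor nuclear, so no absorption or $KK$-vanishing argument of this kind is available, and your embedding $\phi$ of the fiber never actually interacts with the quotient you use.

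Two supporting claims are also incorrect. First, the kernel of a $*$-homomorphism $E \to R^\omega$ built from tracial states is $\{x \in E : \tau(x^*x) = 0 \text{ for all } \tau \in T(E)\}$, which is in general strictly larger than the largest traceless ideal: tracelessness concerns lower semicontinuous $2$-quasitraces, which may be unbounded. For example, $\cal K$ (or $\W \otimes \cal K$) admits no tracial states yet is not traceless, since the canonical densely finite trace is a nonzero lsc trace. So your tracial map can kill far more than $E_0$, and injectivity of any assembled map is lost; note also that $\ker \pi_{x_0}$ is typically far from traceless (e.g.\ $C_0(X \setminus \{x_0\}) \otimes \Z$ inside $C(X) \otimes \Z$), so your decomposition is essentially unrelated to the fiber data in the hypotheses. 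Second, the topological claim is wrong in one of the two cases of the theorem: if $E$ is unital, then $\Prim(E)$ is itself a non-empty compact open subset of $\Prim(E)$, so connectedness of $X$ cannot force the condition you want, and your route to $E_0 \hookrightarrow J$ collapses exactly in the unital case. The condition the paper needs is that $\Prim(I)$, for $I = \ker(E \to E_{x_0})$, has no non-empty compact open subsets --- that is where compactness and connectedness of $X$ enter --- and the unital/non-unital hypothesis is not a compact-open obstruction on $\Prim(E)$ but reflects the difference between absorbing and unitally absorbing extensions; the paper's Remark 3.5 gives a non-unital $E$ with unital fiber $\C$ that fails to embed because it contains projections, and there the singleton of $\Prim(E)$ corresponding to that fiber is not even open, contrary to your proposed mechanism.
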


This follows from Theorem \ref{thm:extensionsembed}, which asserts that $\Z_\omega$-embeddability passes to extensions. The proof makes use of extension theory, and specifically Elliott and Kucerovsky's characterization of unitally absorbing extensions \cite{Elliott-Kucerovsky_2001}. As a particular consequence, we show that homotopy equivalence to $\Z$ is a sufficient condition to embed into $\Z_\omega$ for separable, exact $C^*$-algebras:

\begin{mcor}
    Any separable, exact $C^*$-algebra $A$ which is homotopy equivalent to $\Z$, embeds into $\Z_\omega$.
\end{mcor}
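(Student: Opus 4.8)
The plan is to use the homotopy equivalence to place $A$ inside a $C^*$-algebra that fits into a short exact sequence with $\Z_\omega$-embeddable ideal and quotient, and then to invoke Theorem~\ref{thm:extensionsembed}. Unpacking the hypothesis, I would fix a $*$-homomorphism $\phi\colon A\to\Z$ admitting a homotopy inverse $\psi\colon\Z\to A$, so that $\psi\phi\sim\id_A$ and $\phi\psi\sim\id_\Z$. Form the mapping cylinder $Z_\phi=\{(a,g)\in A\oplus C([0,1],\Z):g(0)=\phi(a)\}$. Sending $a$ to the pair consisting of $a$ and the constant path at $\phi(a)$ defines an injective $*$-homomorphism $A\hookrightarrow Z_\phi$, so it suffices to embed $Z_\phi$ into $\Z_\omega$. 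Observe that $Z_\phi$ is separable, and exact because it is a subalgebra of $A\oplus\bigl(C([0,1])\otimes\Z\bigr)$, with $A$ exact and $C([0,1])\otimes\Z$ nuclear.

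Evaluation at the free endpoint gives a surjection $\mathrm{ev}_1\colon Z_\phi\to\Z$ whose kernel is exactly the mapping cone $C_\phi=\{(a,g)\in A\oplus C_0([0,1),\Z):g(0)=\phi(a)\}$, producing a short exact sequence $0\to C_\phi\to Z_\phi\xrightarrow{\mathrm{ev}_1}\Z\to 0$. The quotient $\Z$ is $\Z_\omega$-embeddable via the constant embedding, so by Theorem~\ref{thm:extensionsembed} it remains only to check that the ideal $C_\phi$ is $\Z_\omega$-embeddable. This is where the homotopy inverse is genuinely used: since $\phi$ is a homotopy equivalence, its mapping cone $C_\phi$ is contractible, the contraction being assembled in the standard way from the homotopies $\psi\phi\sim\id_A$ and $\phi\psi\sim\id_\Z$.

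To finish I would record the observation that any separable contractible $C^*$-algebra $D$ embeds into $\Z_\omega$. Indeed, a contraction is a $*$-homomorphism $\Theta\colon D\to C([0,1],D)$ with $\mathrm{ev}_0\Theta=\id_D$ and $\mathrm{ev}_1\Theta=0$; then $\Theta$ is injective (as $\mathrm{ev}_0\Theta=\id_D$) and takes values in $\{f\in C([0,1],D):f(1)=0\}$, which is isomorphic to the cone over $D$. Since the cone over a separable $C^*$-algebra embeds into $\Z_\omega$ by Theorem A, so does $D$. Applying this with $D=C_\phi$ gives $\Z_\omega$-embeddability of the ideal, whence Theorem~\ref{thm:extensionsembed} yields $Z_\phi\hookrightarrow\Z_\omega$, and therefore $A\hookrightarrow Z_\phi\hookrightarrow\Z_\omega$.

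I expect the main obstacle to be verifying that the hypotheses of Theorem~\ref{thm:extensionsembed} are actually met in this extension: the surrounding results rely on absorbing-extension machinery in the spirit of Elliott--Kucerovsky, which typically requires exactness throughout and nuclearity of the quotient (here supplied by $\Z$), so I would need to confirm our separable, exact data with nuclear quotient falls within its scope. The only other non-formal input is the contractibility of the mapping cone of a homotopy equivalence; this is a standard fact in $C^*$-algebraic homotopy theory, but I would want to pin down a clean reference or give the explicit contraction, since it is precisely the step that converts ``$\phi$ is a homotopy equivalence'' into a usable embeddability statement for the ideal.
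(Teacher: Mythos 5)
Your construction is sound as far as it goes: $\mathrm{ev}_1\colon Z_\phi\to\Z$ is surjective with kernel $C_\phi$, $Z_\phi$ is separable and exact, and your observation that a separable contractible $C^*$-algebra embeds into its own cone and hence into $\Z_\omega$ by Corollary~\ref{cone over separable} is correct. Your route is also genuinely different from the paper's: the paper forms the mapping cylinder of the homotopy \emph{inverse} $\psi\colon\Z\to A$, so that the ideal of its extension is the plain cone $C_0([0,1),A)$, and uses the homotopy $\id_A\sim_h\psi\circ\varphi$ to embed $A$ into the cylinder; your cylinder makes the embedding of $A$ trivial, but makes the ideal the mapping cone $C_\phi$. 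The genuine gap is that you have misread the hypothesis of Theorem~\ref{thm:extensionsembed} on the ideal: what is required is \emph{not} that $I$ be $\Z_\omega$-embeddable, but that $\mathrm{Prim}(I)$ contain no non-empty compact open subsets, and these conditions are inequivalent. Indeed, under your reading the theorem would be false: the extension $0\to\mathbb C\to\mathbb C\oplus\Z\to\Z\to 0$ has separable, exact, \emph{unital} middle algebra, unital simple nuclear $\Z_\omega$-embeddable quotient, and $\Z_\omega$-embeddable ideal, yet $\mathbb C\oplus\Z$ contains two orthogonal non-zero projections and therefore cannot embed into $\Z_\omega$, whose only projections are $0$ and $1$; the one hypothesis that fails there is precisely the Prim condition on the ideal. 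So proving $C_\phi$ contractible, hence $\Z_\omega$-embeddable, verifies the wrong condition, and the condition you actually need for $C_\phi$ is never addressed.

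The gap is fillable, but not formally. Contractibility of $C_\phi$ does imply the Prim condition: if $K\neq 0$ were an ideal with $\mathrm{Prim}(K)$ compact, choose a full $k\in K_+$ with $\inf_{P\in\mathrm{Prim}(K)}\norm{k+P}>0$ and a contraction $(H_s)_{s\in[0,1]}$ of $C_\phi$; then $\{s\in[0,1]\,:\,K\subseteq I_s\}$, where $I_s$ is the closed ideal generated by $H_s(k)$, is non-empty and clopen, forcing $K\subseteq I_1=0$. But this connectedness argument --- like the contractibility of the mapping cone of a homotopy equivalence itself, which for $C^*$-algebras is true but requires a Puppe-sequence argument rather than a citation --- is additional, non-formal work your proposal does not contain, and it is exactly the difficulty the paper's decomposition avoids: $\mathrm{Prim}(C_0([0,1),A))$ has no non-empty compact open subsets no matter how wild $\mathrm{Prim}(A)$ is. A secondary unverified hypothesis: Theorem~\ref{thm:extensionsembed} also demands that $D$ be non-unital or $E$ be unital, whereas your $D=\Z$ is unital and $Z_\phi$ is unital only if $A$ and $\phi$ are, which is not assumed. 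Nor can you retreat to the paper's continuous-field corollary, because $Z_\phi$ is in general only an upper semicontinuous field over $[0,1]$: lower semicontinuity of the norm at the fiber carrying $A$ forces $\phi$ to be injective, which a homotopy equivalence need not be --- the paper's cylinder is a genuine continuous field precisely because $\psi$ is automatically injective, $\Z$ being simple and $\psi\neq 0$.
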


\subsection*{Acknowledgements} 
This work started during visits of the first author to the University of Oxford in 2024. We would like to thank Stuart White for his hospitality during this time and for his guidance throughout the project, and James Gabe for several very insightful conversations and remarks. We also want to thank Christopher Schafhauser for the illuminating ideas and discussions.

\section{Existence of embeddings into ultrapowers of $\Z$ and $\W$}
For an arbitrary $C^*$-algebra $A$, we denote the cone over $A$ as
\begin{equation}
    CA := C_0(0,1] \otimes A \cong C_0((0,1],A).
\end{equation}
We also recall the definition of the ultrapower and the trace-kernel ideal of a $C^*$-algebra. For the entirety of this paper, fix a free ultrafilter $\omega$ on $\mathbb N$.
\begin{defn}
    Let $B$ be a $C^*$-algebra. The \emph{ultrapower} of $B$ is defined as
    \begin{equation}
        B_\omega := \ell^\infty(B) /\{ (x_n)_n \in \ell^\infty(B) \mid \lim_{n\to \omega} \|x_n\| = 0\}.
    \end{equation}
    If moreover the set of tracial states $T(B)$ on $B$ is non-empty, the \emph{trace-kernel ideal} of $B$ is defined as
    \begin{equation}
        J_B := \{ (x_n)_n \in B_\omega \mid \lim_{n\to \omega}\textstyle{\sup_{\tau\in T(B)}} \tau(x_n^*x_n) = 0\}.
\end{equation}
\end{defn}

\begin{prop} \label{cone over Q_omega}
    The cone over any UHF algebra embeds into $\Z$ and into $\W$. Subsequently, the cone over $\cQ_\omega$ embeds into $\Z_\omega$ and $\W_\omega$.
\end{prop}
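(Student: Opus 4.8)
The plan is to reduce both embeddings to the existence of a single flexible object: an injective, completely positive contractive (c.p.c.) order zero map out of $\cQ$. By the Winter--Zacharias correspondence, for a unital $C^*$-algebra $F$ a $*$-homomorphism $C_0(0,1]\otimes F\to B$ is exactly the datum of a c.p.c. order zero map $\phi\colon F\to B$, via $\iota\otimes x\mapsto\phi(x)$, where $\iota\in C_0(0,1]$ is the canonical positive generator. Writing $\phi=h\,\pi$ for the supporting positive contraction $h=\phi(1_F)$ and the support $*$-homomorphism $\pi$, and using that the ideals of $C_0(0,1]\otimes\cQ$ are precisely $C_0(U)\otimes\cQ$ for open $U\subseteq(0,1]$ (since $\cQ$ is simple), one checks that the associated $*$-homomorphism is injective exactly when $\phi\neq 0$ and $\mathrm{sp}(h)=[0,1]$. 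Thus $C\cQ$ embeds into $B$ if and only if there is a c.p.c. order zero map $\phi\colon\cQ\to B$ whose support $\phi(1_\cQ)$ has full spectrum $[0,1]$; any nonzero such map is automatically injective because $\cQ$ is simple. Since every UHF algebra embeds unitally into the universal UHF algebra $\cQ$, we get $CU\hookrightarrow C\cQ$, so it suffices to treat $\cQ$.

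For $\W$ this is immediate from absorption. By the classification of simple, separable, nuclear, stably projectionless, $\mathcal Z$-stable $C^*$-algebras satisfying the UCT, the invariant of $\W\otimes\cQ$ (trivial $K$-theory and a unique densely defined trace) coincides with that of $\W$, so $\W\otimes\cQ\cong\W$. Since $\W\cong\W\otimes\Z$ is $\mathcal Z$-stable it contains a positive element $h$ with $\mathrm{sp}(h)=[0,1]$ (take $b\otimes k$ for norm-one positive contractions $b\in\W$ and $k\in\Z$ with $\mathrm{sp}(k)=[0,1]$). Then $\phi(a):=h\otimes a\in\W\otimes\cQ\cong\W$ is c.p.c. order zero and injective, with support $h\otimes 1_\cQ$ of full spectrum, yielding $C\cQ\hookrightarrow\W$.

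The case of $\Z$ is the crux, since $\Z\otimes\cQ\cong\cQ\neq\Z$ and no tensorial absorption is available. Here I would build the order zero map through the inductive limit structure. Writing $\cQ=\varinjlim(M_{k_n},\sigma_n)$ with unital connecting maps and $\Z=\varinjlim Z_{p_n,q_n}$ as a limit of prime dimension-drop algebras, each building block carries a canonical injective, full-support c.p.c. order zero map $M_k\to Z_{k,k+1}$, namely $x\mapsto\bigl(t\mapsto(1-t)\,x\otimes 1\bigr)$; composing with the (spectrum-preserving) inclusions into $\Z$ gives full-support order zero maps $\psi_n\colon M_{k_n}\to\Z$. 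Finite tensor products of order zero maps are again order zero (the form $h\,\pi$ is stable under tensoring, with support the tensor product of the supports), and a tensor product of finitely many full-spectrum supports again has spectrum $[0,1]$. The obstruction is that the naive infinite tensor product $\bigotimes_n\psi_n\colon\cQ\to\bigotimes_n\Z\cong\Z$ is not well defined, precisely because order zero maps are non-unital, so the squares against $\sigma_n$ fail to commute on the nose. I expect this coherence problem to be the main difficulty: it should be resolved by an Elliott-type approximate intertwining, perturbing the $\psi_n$ by unitaries and controlling the values $\tau(\psi_n(1_{M_{k_n}}))$ under the unique trace $\tau$ of $\Z$ so that the support spectra stay full and the limit map remains injective. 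Alternatively, one may invoke Robert's classification of $*$-homomorphisms from (inductive limits of) one-dimensional noncommutative CW (NCCW) complexes into stably finite classifiable algebras: $C\cQ$ is such a limit and $\Z$ is classifiable, so the embedding exists as soon as one exhibits a faithful Cuntz-semigroup morphism $\mathrm{Cu}(C\cQ)\to\mathrm{Cu}(\Z)$ sending the support class to a full element, which is straightforward.

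Finally I would deduce the ultrapower statement. Fixing an embedding $\Phi\colon C\cQ\hookrightarrow\Z$ and applying it coordinatewise gives an injective $*$-homomorphism $(C\cQ)_\omega\hookrightarrow\Z_\omega$. It then remains to embed $C(\cQ_\omega)=C_0(0,1]\otimes\cQ_\omega$ into $(C\cQ)_\omega=(C_0(0,1]\otimes\cQ)_\omega$ via the canonical map $f\otimes(a_n)_\omega\mapsto(f\otimes a_n)_\omega$. Since $C_0(0,1]$ is nuclear, the functor $C_0(0,1]\otimes(-)$ is exact and commutes with the quotient defining the ultrapower, so the injection $C_0(0,1]\otimes\ell^\infty(\cQ)\hookrightarrow\ell^\infty(C\cQ)$ descends to an injection $C_0(0,1]\otimes\cQ_\omega\hookrightarrow(C\cQ)_\omega$. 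Composing yields $C(\cQ_\omega)\hookrightarrow\Z_\omega$, and the identical argument with $\W$ in place of $\Z$ gives $C(\cQ_\omega)\hookrightarrow\W_\omega$.
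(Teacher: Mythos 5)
Your overall framework is sound, and where it is complete it genuinely diverges from the paper: the Winter--Zacharias translation of the problem into producing a c.p.c.\ order zero map $\cQ\to B$ with full-spectrum support is correct (including the reduction from arbitrary UHF algebras to $\cQ$); your $\W$ argument via $\W\otimes\cQ\cong\W$ is valid, though it invokes the classification of KK-contractible $\Z$-stable algebras, a far heavier tool than the paper's elementary route (embedding $CM_{n_k}$ into Jacelon's building blocks $W(1,n_k)$ and running a one-sided approximate intertwining into $\W$); and your ultrapower step is fine --- the injectivity of $C_0(0,1]\otimes\cQ_\omega\to(C\cQ)_\omega$ does hold because $C_0(0,1]$ is exact, which is the content behind the inclusion $C\cQ_\omega\subseteq(C\cQ)_\omega$ that the paper uses without comment.

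The genuine gap is the $\Z$ case, which you correctly call the crux and then do not prove. Your first route ends with ``I expect this coherence problem to be the main difficulty: it should be resolved by an Elliott-type approximate intertwining'' --- but that intertwining \emph{is} the entire argument: to run it one needs a uniqueness theorem for c.p.c.\ order zero maps $M_{k_n}\to\Z$ up to approximate unitary equivalence, together with a coherent choice of Cuntz-semigroup data along the inductive system so that $\psi_n$ and $\psi_{n+1}\circ\sigma_n$ agree up to that equivalence; none of this is set up, and without it nothing is proved. Your second route, via Robert's classification of homomorphisms out of inductive limits of one-dimensional NCCW complexes, is viable in principle, but the step you dismiss as ``straightforward'' is where all the work lives: one must compute $\mathrm{Cu}^\sim(C\cQ)$ (e.g.\ identify $\mathrm{Cu}(C\cQ)$ with lower semicontinuous functions $(0,1]\to\mathrm{Cu}(\cQ)$), exhibit a candidate morphism (say, integration into the soft part of $\mathrm{Cu}(\Z)$), verify it is a morphism in the category $\mathrm{Cu}$ --- in particular that it preserves compact containment, which is the delicate axiom --- and check the compatibility condition on strictly positive elements in Robert's existence theorem. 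The irony is that the crux admits a one-line solution, which is exactly what the paper does: the unitization of $CM_{\mathfrak n}$ is the generalized dimension-drop algebra $Z_{1,\mathfrak n}=\{f\in C([0,1],M_{\mathfrak n})\mid f(0)\in\C 1\}$, and R\o rdam--Winter's Proposition 3.3 provides a unital embedding $Z_{1,\mathfrak n}\hookrightarrow\Z$, so restricting to $CM_{\mathfrak n}$ finishes the $\Z$ case immediately, with no classification machinery at all.
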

\begin{proof}
Let $M_n$ be a UHF algebra corresponding to a supernatural number $\mathfrak n$. We first show that $CM_{\mathfrak n}$ embeds into $\Z$. Applying \cite[Proposition 3.3]{RordamWinterJiangSu} for $p = 1$ and $q = \mathfrak n$ gives a unital embedding of
\begin{equation}
    Z_{1,\mathfrak n} := \{ f\in C([0,1], \mathbb C \otimes M_{\mathfrak n} ) \mid f(0) \in \mathbb C \otimes \mathbb C,\ f(1)\in \mathbb C\otimes M_{\mathfrak n} \} \cong (C M_{\mathfrak n})^\sim, 
\end{equation}
the unitization of $C M_{\mathfrak n}$ into $\Z$. In particular, $CM_{\mathfrak n}$ embeds into $\Z$.

To show that $CM_{\mathfrak n}$ also embeds into $\W$, write $M_{\mathfrak n}$ as an inductive limit $\varinjlim M_{n_k}$ of matrix algebras with connecting maps $\mu_k: M_{n_k} \to M_{n_{k+1}}$ and $n_k | n_{k+1}$ for all $k$. As such, $CM_{\mathfrak n}$ is the inductive limit $\varinjlim C M_{n_k}$. Consider the Jacelon building blocks as in \cite{Jacelon}: for $\ell, m \in \mathbb{N}$,
\begin{equation}
    W(\ell,m) := \{ f\in C([0,1], M_{\ell+1}\otimes M_m) \mid f(0) = (1_\ell \oplus0)\otimes c, \ f(1) = 1_{\ell+1} \otimes c \text{ for some } c\in M_m \} 
\end{equation}
and note that $W(\ell,m) \cong W(\ell,1) \otimes M_m$. Hence, the embedding
\begin{equation}
    \iota: C_0(0,1] \to W(1,1): f \mapsto f(1) \oplus f
\end{equation}
induces embeddings $\iota\otimes \id_{M_{n_k}}: C M_{n_k} \to W(1, n_k)$  for all $k\in\mathbb N$. Now let
\begin{equation}
    \rho_k := \id_{W(1,1)} \otimes \mu_k: W(1, n_k) \to W(1,n_{k+1}),
\end{equation}
and note that $\rho_k$ restricts to $\id_{C_0(0,1]}\otimes \mu_k$ on $CM_{n_k}$ and preserves the faithful trace $\tau_k$ on $W(1, n_k)$ induced by the faithful trace on $C[0,1] \otimes M_2\otimes M_{n_k}$ coming from Lebesgue integration on $[0,1]$. Use part (i) of \cite[Lemma 4.1]{Jacelon} to find $*$-homomorphisms $\psi_k: W(1, n_k) \to \W$ which satisfy $\tau \circ \psi_k = \tau_k$, where $\tau$ is the unique trace on $\cal{W}$. In particular, this forces the $\psi_k$ to be injective. Moreover, part (ii) of \cite[Lemma 4.1]{Jacelon} implies that $\psi_k$ and $\psi_{k+1}\circ \rho_k$ are approximately unitarily equivalent. A one-sided intertwining argument (see for example \cite[Theorem 1.10.14]{LinClassification}) then shows that there exists a copy of the inductive limit $\varinjlim (W(1, n_k),\rho_k)$ inside of $\W$, and since the $\rho_k$ respect the connecting maps $\id_{C_0(0,1]}\otimes \mu_k: CM_{n_k} \subseteq W(1, n_k)$, $\W$ also contains $\varinjlim C M_{n_k} \cong C M_{\mathfrak n}$.

For the second part of the statement, note that the embedding $C\cQ \hookrightarrow \Z$ from the previous part of the proof induces an embedding
\begin{equation}
    C \cQ_\omega \subseteq \big( C \cQ\big)_\omega \hookrightarrow \Z_\omega.
\end{equation}
This concludes the proof.
\end{proof}

\begin{remark}
    As a consequence of the main theorem in \cite{Robert_NCCWclassification}, there exists a trace-preserving embedding of $\W$ into $\Z$, which results in an embedding $J_\W \hookrightarrow J_\Z$. This means that the embedding into $J_\Z$ can alternatively be obtained through its embedding into $J_\W$.
\end{remark}

\begin{defn}
    A $C^*$-algebra $A$ is said to be \emph{traceless} if every lower semicontinuous $2$-quasitrace can only take the values $0$ or $\infty$.
\end{defn}

\begin{cor} \label{cone over separable}
    The cone over any separable $C^*$-algebra $A$ embeds into $\Z_\omega$ and into $\W_\omega$. If $CA$ is traceless, then the embeddings lie within the trace-kernel ideals $J_\Z$ resp.\ $J_\W$.
\end{cor}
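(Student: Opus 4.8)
The plan is to route an embedding of $CA$ through the cone $C\cQ_\omega$, for which Proposition \ref{cone over Q_omega} already supplies embeddings into $\Z_\omega$ and into $\W_\omega$. Two ingredients are required. First, since $A$ is separable, so is $CA$, and by Voiculescu's theorem \cite{Voi1991QD} the cone $CA$ is quasidiagonal; I will use this to produce an embedding $CA \hookrightarrow \cQ_\omega$. Second, I will embed $CA$ into its \emph{double} cone $CCA = C(CA)$, so that applying the cone functor to the first embedding yields $CCA \hookrightarrow C\cQ_\omega$. Composing everything gives the chain
\[
 CA \hookrightarrow CCA = C(CA) \hookrightarrow C\cQ_\omega \hookrightarrow \Z_\omega \quad (\text{resp.\ } \W_\omega),
\]
from which the first assertion of the corollary follows.

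For the first ingredient, quasidiagonality of the separable algebra $CA$ provides contractive completely positive maps $\phi_n \colon CA \to M_{k_n}$ that are asymptotically multiplicative and asymptotically isometric. Fixing unital embeddings $\iota_n \colon M_{k_n} \hookrightarrow \cQ$ into the universal UHF algebra and setting $\psi_n := \iota_n \circ \phi_n$, the assignment $b \mapsto (\psi_n(b))_n$ descends to a $*$-homomorphism $CA \to \cQ_\omega$: asymptotic multiplicativity makes it multiplicative in the limit along $\omega$, and asymptotic isometry makes it isometric, hence an embedding.

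For the second ingredient, the continuous proper surjection $m \colon (0,1]^2 \to (0,1]$, $(s,t) \mapsto st$, induces by pullback an embedding $C_0(0,1] \hookrightarrow C_0((0,1]^2)$; tensoring with $\id_A$ gives $CA = C_0(0,1] \otimes A \hookrightarrow C_0((0,1]^2) \otimes A = CCA$. Applying the cone functor $C_0(0,1] \otimes (-)$ to the embedding $CA \hookrightarrow \cQ_\omega$ preserves injectivity, since $C_0(0,1]$ is nuclear, yielding $CCA \hookrightarrow C\cQ_\omega$ and closing the chain above. I expect the conceptual crux to lie precisely here: one cannot embed $A$ itself into $\cQ_\omega$ in general (for instance when $A$ is purely infinite, as $\cQ_\omega$ is stably finite), so quasidiagonality only lands $CA$ in $\cQ_\omega$ rather than in the \emph{cone} $C\cQ_\omega$ that Proposition \ref{cone over Q_omega} requires; the double-cone embedding $CA \hookrightarrow CCA$ is exactly the device that bridges this gap.

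For the statement about the trace-kernel ideals, I would work at the level of the quotients $\Z_\omega / J_\Z$ and $\W_\omega / J_\W$, each of which carries a faithful trace induced by the unique trace on $\Z$, resp.\ $\W$. Composing the embedding $CA \hookrightarrow \Z_\omega$ with the quotient map $\pi \colon \Z_\omega \to \Z_\omega / J_\Z$ and pulling back this faithful trace gives a bounded, hence lower semicontinuous, trace on $CA$, which is in particular a lower semicontinuous $2$-quasitrace. If $CA$ is traceless, such a trace can only take the values $0$ or $\infty$, and being finite-valued it must vanish identically; faithfulness of the trace on the quotient then forces $\pi(CA) = 0$, i.e.\ the image of $CA$ lies in $J_\Z$. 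The same argument applies verbatim with $\W$ in place of $\Z$.
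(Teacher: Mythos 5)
Your proposal is correct and follows essentially the same route as the paper: quasidiagonality of $CA$ (Voiculescu) gives $CA \hookrightarrow \cQ_\omega$, the map $f \mapsto \bigl((t,s)\mapsto f(ts)\bigr)$ embeds $CA$ into its double cone, and applying the cone functor plus Proposition \ref{cone over Q_omega} closes the chain, exactly as in the paper. The traceless part is also the paper's argument in mildly different clothing—the paper applies tracelessness directly to the restriction of the limit trace on $\Z_\omega$ (resp.\ $\W_\omega$), while you pass to the quotient by the trace-kernel ideal and use faithfulness of the induced trace, which is an equivalent formulation.
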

\begin{proof}
Let $A$ be a separable $C^*$-algebra. As $CA$ is quasidiagonal by \cite{Voi1991QD}, there exists an embedding $CA \hookrightarrow \cQ_\omega$, inducing an embedding $C(CA) \hookrightarrow C \cQ_\omega$. Now, the map $\iota: CA \to C(CA)$ given by
\begin{equation}
    \iota(f): [0,1]^2\to A: (t,s) \mapsto f(ts) \quad (f\in CA)
\end{equation}
is an injective $*$-homomorphism, so composing it with the map $C(CA) \hookrightarrow C \cQ_\omega$ gives an embedding $CA\hookrightarrow C\cQ_\omega$. The result then follows from Proposition \ref{cone over Q_omega}.

For the second part of the statement, assume $CA$ is traceless. Note that any trace on $\Z_\omega$ must be identically zero on $CA$, as such a trace would induce a non-zero, finite trace on $CA$ otherwise. In particular, this holds for the limit trace induced by the unique trace on $\Z$, which implies that the image of $CA$ must land in $J_\Z$. The same argument holds for $\W_\omega$, finishing the proof.
\end{proof}

The corollary above is an analogue of \cite[Theorem 8.3.5]{BrownOzawa_book} for $\Z_\omega$- and $\W_\omega$-embeddability, showing that both properties are similar in flavor to AF-embeddability. With this in mind, we want to highlight what happens in the traceless case. In this setting, all these notions of embeddability turn out to be equivalent, as a consequence of the results in \cite{gabe2020traceless} and Corollary \ref{cone over separable}:

\begin{cor} \label{equivalences}
    For $A$ separable, exact, and traceless, the following are equivalent:
    \begin{enumerate}
        \item \label{embeds in cone}$A$ embeds into $C_0(0,1] \otimes \cal{O}_2$.
        \item \label{qd} $A$ is quasidiagonal.
        \item $A$ is AF-embeddable.
        \item $\operatorname{Prim}(A)$ has no nonempty compact open subsets.
        \item \label{stably finite} $A$ is stably finite.
        \item $A$ is stably projectionless.
        \item \label{embeds in J_Z}$A$ embeds into $J_\Z \subseteq \Z_\omega$.
        \item \label{embeds in J_W}$A$ embeds into $J_\W \subseteq \W_\omega$.
    \end{enumerate}
\end{cor}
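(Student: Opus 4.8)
The plan is to treat the equivalence of conditions (\ref{embeds in cone})--(6) as the input supplied by the traceless structure theory of \cite{gabe2020traceless}, and to graft the two new conditions (\ref{embeds in J_Z}) and (\ref{embeds in J_W}) onto this established cycle. Since condition (\ref{stably finite}) already belongs to that cycle, it suffices to prove the four implications (\ref{embeds in cone})$\Rightarrow$(\ref{embeds in J_Z}), (\ref{embeds in cone})$\Rightarrow$(\ref{embeds in J_W}), (\ref{embeds in J_Z})$\Rightarrow$(\ref{stably finite}), and (\ref{embeds in J_W})$\Rightarrow$(\ref{stably finite}); together with (\ref{embeds in cone})$\Leftrightarrow$(\ref{stably finite}) from \cite{gabe2020traceless} these close the loop and render all eight conditions equivalent.

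For the forward implications, I would observe that it is enough to embed the cone $C\cal{O}_2 = C_0(0,1]\otimes\cal{O}_2$ itself into $J_\Z$ and $J_\W$, since $A$ embeds into $C\cal{O}_2$ by (\ref{embeds in cone}). The key point is that $C\cal{O}_2$ is traceless: as $\cal{O}_2 \cong \cal{O}_2 \otimes \cal{O}_2$, we have $C\cal{O}_2 \cong (C\cal{O}_2)\otimes \cal{O}_2$, and any $\cal{O}_2$-absorbing $C^*$-algebra is traceless, since the Cuntz relations in $\cal{O}_2$ force every lower semicontinuous $2$-quasitrace to be invariant under doubling and hence to take only the values $0$ and $\infty$. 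Applying Corollary \ref{cone over separable} to the separable algebra $\cal{O}_2$ then yields embeddings of $C\cal{O}_2$ into $\Z_\omega$ and $\W_\omega$ whose images, by tracelessness, land in $J_\Z$ and $J_\W$ respectively; precomposing with $A \hookrightarrow C\cal{O}_2$ gives (\ref{embeds in J_Z}) and (\ref{embeds in J_W}).

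For the reverse implications I would deduce stable finiteness of $A$ from stable finiteness of the ambient ultrapower, using that stable finiteness passes to $C^*$-subalgebras (an isometry in a matrix amplification of a subalgebra is already a unitary there once it is one in the larger algebra, as the relation $vv^*=1$ is computed inside the subalgebra). Thus it remains to show $\Z_\omega$ and $\W_\omega$ are stably finite. For $\Z_\omega$, I would use that $M_n(\Z)$ carries the faithful tracial state $\mathrm{tr}_n\otimes\tau$ and is therefore finite, and then run a lifting argument in $M_n(\Z_\omega)\cong M_n(\Z)_\omega$: given $v$ with $v^*v = 1$, lift it to $(v_k)_k$ with $v_k^* v_k \to 1$ along $\omega$, polar-decompose the $\omega$-eventually invertible $v_k$ to isometries $u_k = v_k (v_k^* v_k)^{-1/2}$, which are unitaries by finiteness of $M_n(\Z)$, and note $u_k - v_k \to 0$ to conclude $vv^* = 1$. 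For $\W_\omega$, the trace-preserving embedding $\W \hookrightarrow \Z$ from the remark following Proposition \ref{cone over Q_omega} induces $\W_\omega \hookrightarrow \Z_\omega$, so stable finiteness of $\W_\omega$ follows from that of $\Z_\omega$.

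The conceptually hard work is already packaged in Corollary \ref{cone over separable}; accordingly the only genuinely new obstacle is the stable finiteness of the ultrapowers, and in particular the lifting step for $\Z_\omega$ (the passage from $v^*v=1$ in the ultrapower to a representative that is an honest isometry in $M_n(\Z)$). Everything else---the tracelessness of $C\cal{O}_2$ and the inheritance of stable finiteness by subalgebras---is routine, and the embedding $\W\hookrightarrow\Z$ conveniently reduces the $\W_\omega$ case to the $\Z_\omega$ case.
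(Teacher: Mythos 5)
Your proposal is correct and follows the same skeleton as the paper's proof: quote the equivalence of (\ref{embeds in cone})--(6) from \cite{gabe2020traceless}, obtain (\ref{embeds in cone})$\Rightarrow$(\ref{embeds in J_Z}) and (\ref{embeds in cone})$\Rightarrow$(\ref{embeds in J_W}) from Corollary \ref{cone over separable} applied to $\cal{O}_2$ (your explicit verification that $C\cal{O}_2$ is traceless, via $\cal{O}_2$-stability, is used only implicitly in the paper), and close the loop by showing the ambient ultrapowers are stably finite and that stable finiteness passes to $C^*$-subalgebras. The differences are confined to how two sub-steps are justified. Where you prove stable finiteness of $\Z_\omega$ by hand --- lifting an isometry in $M_n(\Z_\omega)\cong M_n(\Z)_\omega$, polar-decomposing the eventually invertible lifts, and invoking finiteness of $M_n(\Z)$ via its faithful trace --- the paper simply cites \cite[\S 3.6(d)]{farah2021model} for the fact that ultrapowers of stably finite algebras are stably finite; your argument is correct and has the merit of being self-contained. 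For $\W_\omega$, the paper argues symmetrically ($\W$ is stably projectionless, hence stably finite, and the same citation applies), whereas you route through the trace-preserving embedding $\W\hookrightarrow\Z$ from the remark after Proposition \ref{cone over Q_omega}; this is valid, but it leans on Robert's classification theorem \cite{Robert_NCCWclassification} where nothing of the sort is needed --- your own lifting argument applies verbatim to $\W$, since $\W$ also carries a faithful tracial state.
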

\begin{proof}
    The equivalence between the first four items above is \cite[Theorem A]{gabe2020traceless}, and Corollary C from the same paper adds (5) and (6).
    The implications (\ref{embeds in cone}) $\implies$ (\ref{embeds in J_Z}) and (\ref{embeds in cone}) $\implies$ (\ref{embeds in J_W}) are Corollary \ref{cone over separable} for $A = \cal O_2$.
    
    (\ref{embeds in J_Z}) $\implies$ (\ref{stably finite}): As $\Z$ is stably finite, its ultrapower $\Z_\omega$ is stably finite by \cite[\S 3.6(d)]{farah2021model}. It follows that $A$ is also stably finite, being a $C^*$-subalgebra of the stably finite $C^*$-algebra $\Z_\omega$.

    (\ref{embeds in J_W}) $\implies$ (\ref{stably finite}): As $\W$ is stably projectionless and therefore stably finite, the same argument as above does the trick.
\end{proof}

\section{Extensions and continuous fields of $C^*$-algebras}
In this section, we prove our second result, that certain extensions of $\cal{Z}_\omega$-embeddable $C^*$-algebras remain $\cal{Z}_\omega$-embeddable. The idea was inspired by its analogue for AF-embeddability (see for example \cite[Proposition 8.4.9]{BrownOzawa_book}), but the approach taken needs to be quite different, as $\Z_\omega$-embeddability is not preserved under stabilizations in general. Our proof makes extensive use of extension theory and specifically Elliott and Kucerovsky's characterization of unitally absorbing extensions \cite{Elliott-Kucerovsky_2001}. We give a brief overview of the relevant concepts; for a more in-depth discussion as well as proofs of the assertions made, see for example \cite[Chapter VII]{blackadar1998k} or \cite[\S 2]{gabeRuiz2020}.

\subsection*{Extension theory}
An \emph{extension} of a $C^*$-algebra $A$ by a $C^*$-algebra $I$ is a short exact sequence of the form
\begin{equation}
\begin{tikzcd}
        \mathfrak e: 0 \arrow[r] & I \arrow[r] & E \arrow[r] & A \arrow[r] & 0.
\end{tikzcd}
\end{equation}
An extension is said to be \emph{split} if it has a $*$-homomorphic splitting, and \emph{unital} if $E$ (and therefore necessarily also $A$) is unital. Extensions of $A$ by $I$ correspond one-to-one (up to congruence of extensions) to $*$-homomorphisms $A\to \cQ(I)$ (where $\cQ(I) := \mathcal{M}(I)/I$ is the corona algebra of $I$), which are known as their \emph{Busby maps}. Two extensions $\mathfrak e_1$ and $\mathfrak e_2$ of $A$ by $I$ are said to be \emph{strongly unitarily equivalent}, denoted $\mathfrak e_1\sim_s\mathfrak e_2$, if their corresponding Busby maps are unitarily equivalent by a unitary descending from a unitary in $\mathcal{M}(I)$; in this case, the extension algebras are also isomorphic.

We will only be concerned with the situation where $I$ is $\sigma$-unital and $A$ is separable. If $I$ is stable, then one can define a direct sum $\oplus$ within $\cal M(I)$ (uniquely up to unitary equivalence), which allows us to define an addition $\oplus$ on the set of extensions of $A$ by $I$. As such, one obtains an abelian monoid $\mathrm{Ext}(A,I)$ of extensions with this direct sum as binary operation, modulo the equivalence relation where two extensions $\mathfrak e_1$ and $\mathfrak e_2$ of $A$ by $I$ are equivalent if $\mathfrak e_1\oplus \mathfrak f \sim_s \mathfrak e_2 \oplus \mathfrak f$ for some split extension $\mathfrak f$ of $A$ by $I$. The neutral element of $\mathrm{Ext}(A,I)$ then becomes the equivalence class of split extensions. If moreover $A$ is nuclear, then $\mathrm{Ext}(A,I)$ is in fact a group.

In this context, an extension $\mathfrak e$ is said to be \emph{absorbing} if $\mathfrak e\oplus \mathfrak f\sim_s\mathfrak e$ for all split extensions $\mathfrak f$. Similarly, a unital extension $\mathfrak e$ is said to be \emph{unitally absorbing} if the same holds for all $\mathfrak f$ with a unital splitting. Remarkably, Elliott and Kucerovsky characterized unitally absorbing extensions by the following inherent property, in \cite{Elliott-Kucerovsky_2001}. Recall that, for two positive elements $a$ and $b$ of a $C^*$-algebra $A$, $a$ is said to be \emph{Cuntz subequivalent} to $b$, denoted $a \precsim b$, if for every $\varepsilon>0$, there exists a $v\in A$ such that $v^*bv \approx_\varepsilon a$. Here, we denoted $x \approx_\varepsilon y$ for elements $x$ and $y$ in a $C^*$-algebra $A$ to mean $\norm{x-y} < \varepsilon$.

\begin{defn}
    An extension
    \begin{equation}
    \begin{tikzcd}
        \mathfrak e: 0 \arrow[r] & I \arrow[r] & E \arrow[r] & A \arrow[r] & 0
    \end{tikzcd}
    \end{equation}
    of $C^*$-algebras is said to be \emph{purely large} if for each $x\in I_+$ and $y\in E_+\setminus I$, we have $x\precsim y$.
\end{defn}

This is not the original definition of pure largeness given by Elliott and Kucerovsky. However, it is shown to be equivalent to pure largeness when the ideal is $\sigma$-unital and stable in \cite[Proposition 4.14]{BouwenGabe} (the proof of which is attributed to Carrión, Gabe, Schafhauser, Tikuisis and White).

\begin{lem} \label{lemma:unitizationpurelylarge}
    Let $J$ be a $\sigma$-unital, stable $C^*$-algebra and $D$ a $\sigma$-unital, simple and exact $C^*$-algebra. Then the extension
    \begin{equation}
    \begin{tikzcd}
        \mathfrak f: 0 \arrow[r] & J \otimes D \arrow[r] & J^\dagger \otimes D \arrow[r] & D \arrow[r] & 0
    \end{tikzcd}
    \end{equation}
    obtained by applying $\cdot \otimes D$ to the unitization short exact sequence of $J$, is purely large.
\end{lem}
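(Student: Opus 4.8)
The plan is to verify the displayed condition directly: writing $I := J\otimes D$, $E := J^\dagger\otimes D$ and letting $\pi\colon E\to D$ be the quotient map of $\mathfrak f$, I must show that $x\precsim y$ in $E$ for every $x\in I_+$ and every $y\in E_+$ with $\pi(y)\neq 0$. Since $x\in I$ we have $\pi(x)=0$, so the comparison in the quotient is vacuous and the whole content is an \emph{ideal-domination} statement: the single element $y$, which sticks out of $I$, must dominate all of $I_+$ in the Cuntz order. Exactness of $D$ guarantees that $\mathfrak f$ really is a short exact sequence (so that $\ker\pi=I$ makes sense) and, together with simplicity of $D$, that the ideals of $I=J\otimes D$ are exactly the $\mathfrak i\otimes D$ for $\mathfrak i\trianglelefteq J$; I use this to recognise full elements. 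The guiding model is $J=\mathcal K$, $D=\mathbb C$, where $E=\mathcal K^\dagger$ and $y=k+\lambda 1$ with $\lambda=\pi(y)\neq 0$: there one extracts a co-finite, hence infinite-rank, spectral projection of $y$ and fits any $(x-\varepsilon)_+$, which has finite rank, into it. This already shows stability of $J$ is essential, since for $J=C_0(0,1]$ the analogous extension $0\to C_0(0,1]\otimes D\to C[0,1]\otimes D\to D\to 0$ is \emph{not} purely large.

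The first engine is a reduction, for fixed $\varepsilon>0$, of $(x-\varepsilon)_+$ to the clean form $a'\otimes d$ with $a'\in J_+$, where $d:=\pi(y)$. Using a product approximate unit $(u_\alpha\otimes v_\beta)$ of $I=J\otimes D$ I first obtain $a\in J_+$, $b\in D_+$ with $(x-\varepsilon)_+\precsim a\otimes b$. Now simplicity of $D$ makes $d$ full, so $b\precsim d\oplus\cdots\oplus d$ ($k$ copies) in $M_k(D)$ for some $k$, whence $a\otimes b\precsim a^{\oplus k}\otimes d$. Finally stability of $J$ lets a single positive element absorb finite orthogonal sums: by the Hjelmborg--R\o rdam picture of stability there is $a'\in J_+$ with $a^{\oplus k}\precsim a'$, so that $(x-\varepsilon)_+\precsim a'\otimes d$. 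All implementing elements here live in $J$, $D$ and $I$, i.e. honestly in $E$.

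The second engine, and the crux, is to show $a'\otimes d\precsim y$. Here one must exploit that $y$ is \emph{infinitely large in the $J$-direction}: modulo $I$ it equals $d$, and $1_{J^\dagger}\otimes d\in E$ plays the role of ``$d$ with infinite multiplicity'' because, $J$ being stable, the unit $1_{J^\dagger}=1_{\mathcal M(J)}$ is an infinite orthogonal sum of equivalent projections. The difficulty is that the isometries and matrix-unit shifts in $\mathcal M(J)\otimes 1$ witnessing this multiplicity do \emph{not} lie in $\mathcal M(E)=J^\dagger\otimes D$, as the adjoined unit couples all slots together, so one cannot simply conjugate $y$ by them and remain inside $E$. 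The plan is to reach the large part of $y$ through genuine elements of $E$: take a function $g$ with $g=0$ near $0$ and $g=1$ near the top of the spectrum, so $g(y)\in E$, $g(y)\precsim y$ and $\pi(g(y))=g(d)\neq 0$ is still full; and take a quasicentral approximate unit $(e_n)$ of $I$. The elements $(1-e_n)^{1/2}y(1-e_n)^{1/2}\in E$ retain the image $d$ while their ideal part tends to $0$ in norm, so they approximate the behaviour of $1_{J^\dagger}\otimes d$ while staying in $E$; using stability of $I$ to spread the support of the (now essentially ``finite'') element $a'\otimes d$ into this asymptotically-$d^{\oplus\infty}$ region, and controlling the commutator errors by quasicentrality, one produces a genuine $w\in E$ with $w^*yw\approx a'\otimes d$. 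This is precisely the projectionless incarnation of fitting a finite-rank element into a co-finite spectral projection in the $\mathcal K^\dagger$ model.

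Combining the two engines gives $(x-\varepsilon)_+\precsim a'\otimes d\precsim y$ for every $\varepsilon>0$, hence $x\precsim y$, which is the required pure largeness. The step I expect to be the main obstacle is the second engine: not the existence of the infinite multiplicity, which is immediate from stability at the level of $\mathcal M(I)$, but keeping every Cuntz witness inside $E$ rather than $\mathcal M(I)$. This is where the hypotheses interlock: stability of $J$ supplies the room, simplicity of $D$ makes $d=\pi(y)$ full so that the large part of $y$ is ``as big as all of $I$'', and the quasicentral approximate unit is the device converting multiplier-level multiplicity into honest comparisons in $E$.
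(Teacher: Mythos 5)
Your first engine is sound: reducing $(x-\varepsilon)_+$ to an element of the form $a'\otimes d$ via a product approximate unit, fullness of $d=\pi(y)$ in the simple algebra $D$, and absorption of finite orthogonal sums in the stable algebra $J$ is a correct, if differently packaged, version of the reduction the paper performs (the paper instead compares everything against a strictly positive element $a\otimes b$ of $J\otimes D$ using \cite[Proposition 2.7.ii]{kirchbergRordam2000}, and folds the fullness of $d$ into the later stage via $b\approx\sum_j b_j^*db_j$).

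The gap is your second engine, which is the actual content of the lemma and which your proposal describes but does not prove. You correctly identify the obstruction --- the isometries and shifts witnessing the infinite multiplicity of $1_{\mathcal M(J)}$ live in $\mathcal M(J)\otimes 1$ and are not elements (nor even multipliers) of $E=J^\dagger\otimes D$ --- but the tools you invoke do not overcome it. Abstract stability of $I$ (Hjelmborg--R{\o}rdam) again produces its witnesses only at the multiplier level, and quasicentrality of an approximate unit of $I$ controls commutator errors, which is not the difficulty; worse, after compressing by $(1-e_n)^{1/2}$ for a general quasicentral approximate unit of $I$, the surviving element $(1-e_n)^{1/2}(1\otimes d)(1-e_n)^{1/2}$ is no longer an elementary tensor, which makes fitting $a'\otimes d$ underneath it harder, not easier. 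What closes this gap in the paper is a concrete construction: fix an isomorphism $J\cong J\otimes\mathcal K$, take the approximate unit $e_n=\sum_{i=1}^n g_n\otimes f_{i,i}$ of $J$ built from an approximate unit $(g_n)$ of $J$ with $g_{n+1}g_n=g_n$ (so that $e_ke_{k-1}=e_{k-1}$ exactly), compress $y$ by $v=(1-e_k)^{1/2}\otimes\tilde d$ --- an honest element of $E$ because $1-e_k\in J^\dagger$ --- to obtain $v^*yv\approx(1-e_k)\otimes d$, and then write down the explicit contractions $h_i=\sum_{j=1}^k(e_{(2i+1)k}-e_{2ik})^{1/2}(g_j^{1/2}\otimes f_{2ik+j,j})e_n^{1/2}$, which are genuine elements of $J$ satisfying $h_i^*(1-e_k)h_j=\delta_{ij}e_n$. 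These are the ``shifts cut down into $J$'' that convert multiplier-level multiplicity into Cuntz comparison with witnesses inside $E$; nothing in your sketch constructs them or a substitute, and your own phrase ``the main obstacle'' concedes that the crux is being assumed rather than established. (Two minor slips: $\mathcal M(E)\neq J^\dagger\otimes D$ --- the right-hand side is $E$ itself, and $\mathcal M(E)$ is strictly larger when $D$ is non-unital; and exactness of $D$ is not what makes $\mathfrak f$ exact, since the unitization sequence is split and split exactness always survives the minimal tensor product.)
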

\begin{proof}
We start by showing that $\mathfrak f$ is purely large. As $J$ and $D$ are both $\sigma$-unital, they contain strictly positive elements $a\in J_+$ resp.\ $b\in D_+$, with $\norm{a} = \norm{b} = 1$. As $x \precsim a\otimes b$ for all $x\in (J\otimes D)_+$ by \cite[Proposition 2.7.ii]{kirchbergRordam2000}, it suffices to show that $a\otimes b \precsim y$ for all $y\in (J^\dagger\otimes D)_+\setminus J\otimes D$. Write $y=z+1\otimes d$ for some $z\in J\otimes D$ and $0\neq d\in D_+$. Fix an isomorphism $J\otimes \cal K \cong J$, let $\{f_{i,j}\}_{i,j \in \mathbb N}$ be matrix units in $\cal K$, $\{g_n\}_n$ a sequential approximate unit for $J$ such that $g_{n+1} g_n = g_n$ for all $n$ and denote $e_n := \sum_{i=1}^n g_n\otimes f_{i,i} \in J\otimes \cal K \cong J$ ($n\in\mathbb N$).

First, we claim that for each $\varepsilon>0$ and $n\in \mathbb N$, there exist $k\geq n$ and a contraction $v\in J^\dagger\otimes D$ such that $v^*yv \approx_\varepsilon (1-e_k)\otimes d$. To see this, pick $\varepsilon$ and $n$ as such. Take $k\geq n$ and $\tilde d \in D_+$ an appropriate element of an approximate unit for $D$ such that $d\approx_\varepsilon \tilde d d \tilde d$ and $z \approx_\varepsilon (e_{k-1}^{1/2}\otimes \tilde d)z(e_{k-1}^{1/2}\otimes \tilde d)$. Then $v := (1-e_k)^{1/2}\otimes \tilde d$ is a positive contraction such that
\begin{equation}
    v^*yv \approx_{\varepsilon} v(e_{k-1}^{1/2}\otimes \tilde d)z(e_{k-1}^{1/2}\otimes \tilde d)v + (1-e_k)\otimes \tilde d d \tilde d \approx_{\varepsilon} (1-e_k)\otimes d,
\end{equation}
proving our claim.

Now, to show that $a\otimes b\precsim y$, fix $\varepsilon>0$, and take $n\in\mathbb N$ such that $a\approx_{\varepsilon} a^{1/2}e_n a^{1/2}$. As $D$ is simple, there exist $b_1, ..., b_m \in D$ such that $b \approx_{\varepsilon/\|a^{1/2}e_n a^{1/2}\|} \sum_{j=1}^m b_i^*db_i$. Applying our claim for $\frac{\varepsilon}{\|a^{1/2}e_n a^{1/2}\|\sum_{j=1}^m \|b_j\|^2}$ and $n$, we obtain $k\geq n$ and $v \in J^\dagger\otimes D$. Remark that the elements
\begin{equation}
    h_i := \sum_{j=1}^k (e_{(2i+1)k}-e_{2ik})^{1/2}(g_j^{1/2} \otimes f_{2ik+j,j})e_n^{1/2} \in J\otimes \cal K \cong J \quad (i\in \{1,...,m\}
\end{equation}
are contractions such that $h_i^*h_j = h_i^*(1-e_k)h_j = \delta_{ij} e_n$ for all $i,j\in\{1,...,m\}$. The element
\begin{equation}
    w :=\sum_{j=1}^m h_j a^{1/2} \otimes b_j \in J\otimes D
\end{equation}
then satisfies
\begin{equation}
    w^*w = \sum_{i,j} (a^{1/2}h_i^*h_ja^{1/2})\otimes b_i^*b_j = a^{1/2} e_n a^{1/2}\otimes \sum_{j=1}^m b_j^*b_j,
\end{equation}
so
\begin{equation}
    \|w\|^2 = \norm{a^{1/2} e_n a^{1/2}\otimes \sum_{j=1}^m b_j^*b_j} \leq \norm{a^{1/2} e_n a^{1/2}}\cdot \sum_{j=1}^m \|b_j\|^2.
\end{equation}
Similarly, we have
\begin{equation}
    w^*\big((1-e_k)\otimes d \big)w = a^{1/2} e_n a^{1/2} \otimes \sum_{j=1}^m b_j^* d b_j \approx_{2\varepsilon} a\otimes b,
\end{equation}
so we can conclude that
\begin{equation}
    (vw)^*yvw \approx_\varepsilon w^*\big((1-e_k)\otimes d \big)w \approx_{2\varepsilon} a\otimes b,
\end{equation}
as desired.
\end{proof}

\begin{prop} \label{prop: embedding tensor product}
    Let $A$, $B$, $C$ and $D$ be $C^*$-algebras with $C$ and $D$ unital. If the canonical surjection $A\otimes_{\max} B \to A\otimes B$ is an isomorphism (in particular, if $A$ or $B$ is nuclear), and there exist embeddings $A \hookrightarrow C_\omega$ and $B\hookrightarrow D_\omega$, then $A\otimes B$ embeds into $(C\otimes_\alpha D)_\omega$ for any $C^*$-norm $\|\cdot\|_\alpha$ on $C\odot D$. Moreover, if $A$ and $B$ are unital and the given embeddings are unital, then the resulting embedding of $A\otimes B$ will also be unital.
\end{prop}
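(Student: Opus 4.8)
The plan is to build the embedding by tensoring the two given embeddings at the level of representing sequences, and then to reduce the verification of injectivity to an isometry statement for the minimal tensor norm, where finite-dimensional approximation becomes available.

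First I would construct the ambient $*$-homomorphism. Since $C$ and $D$ are unital, the assignments $(c_n)_n\mapsto (c_n\otimes 1_D)_n$ and $(d_n)_n\mapsto (1_C\otimes d_n)_n$ define $*$-homomorphisms $C_\omega,D_\omega\to (C\otimes_\alpha D)_\omega$ with commuting ranges, using that every $C^*$-norm is a cross norm (so these are well defined and isometric on elementary tensors). The universal property of the maximal tensor product then yields a $*$-homomorphism $\Theta\colon C_\omega\otimes_{\max}D_\omega\to (C\otimes_\alpha D)_\omega$ with $\Theta(u\otimes v)=(c_n\otimes d_n)_n$ for $u=(c_n)_n$, $v=(d_n)_n$. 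Precomposing with $\phi\otimes_{\max}\psi$ and invoking the hypothesis $A\otimes_{\max}B=A\otimes B$, I obtain a $*$-homomorphism $\Xi_\alpha\colon A\otimes B\to (C\otimes_\alpha D)_\omega$ sending $a\otimes b\mapsto (\phi(a)_n\otimes\psi(b)_n)_n$ (for fixed representing sequences of $\phi(a),\psi(b)$). Being a $*$-homomorphism, $\Xi_\alpha$ is automatically contractive, so the whole content of the statement is its injectivity.

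To establish injectivity I would first reduce to $\alpha=\min$: the canonical quotient $C\otimes_\alpha D\twoheadrightarrow C\otimes_{\min}D$ induces $(C\otimes_\alpha D)_\omega\to (C\otimes_{\min}D)_\omega$ carrying $\Xi_\alpha$ to $\Xi_{\min}$, so it suffices to prove $\Xi_{\min}$ injective, and I would in fact show it is isometric. For this I would use the description of the spatial norm by finite-dimensional data: fixing faithful representations of $A$ and $B$ and compressing to finite-rank projections gives $\norm{x}_{\min}=\sup\norm{(\Phi\otimes\Psi)(x)}$, the supremum over all c.c.p.\ maps $\Phi\colon A\to M_k$, $\Psi\colon B\to M_\ell$. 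Since $\Xi_{\min}$ is contractive, it then remains to bound $\norm{\Xi_{\min}(x)}$ below by each $\norm{(\Phi\otimes\Psi)(x)}$.

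The crux, and the step I expect to be the main obstacle, is to realize a fixed c.c.p.\ map $\Phi\colon A\to M_k$ \emph{through the representing sequences defining} $\phi$: for $x=\sum_{i=1}^m a_i\otimes b_i$ I claim there are c.c.p.\ maps $F_n\colon C\to M_k$ with $\lim_{n\to\omega}F_n(\phi(a_i)_n)=\Phi(a_i)$ for each $i$. To see this, extend $\Phi$ via Arveson's theorem (using injectivity of $M_k$) to a c.c.p.\ map $\hat\Phi\colon C_\omega\to M_k$ with $\hat\Phi\circ\phi=\Phi$, rewrite $\hat\Phi$ through the Choi correspondence as a positive functional on $(M_k\otimes C)_\omega=M_k\otimes C_\omega$, and use countable saturation of the ultrapower to realize this functional on the separable subalgebra generated by $\phi(a_1),\dots,\phi(a_m)$ and the unit as an ultralimit of positive functionals on $M_k\otimes C$; the associated c.c.p.\ maps $F_n$ are the desired ones. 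Granting this, choosing such $F_n$ for $\Phi$ and $G_n$ for $\Psi$ simultaneously and using that $F_n\otimes G_n$ is c.c.p.\ for the minimal norm gives
\begin{equation*}
\norm{\Xi_{\min}(x)}=\lim_{n\to\omega}\Big\|\sum_i \phi(a_i)_n\otimes\psi(b_i)_n\Big\|_{\min}\ \ge\ \lim_{n\to\omega}\Big\|\sum_i F_n(\phi(a_i)_n)\otimes G_n(\psi(b_i)_n)\Big\|=\norm{(\Phi\otimes\Psi)(x)},
\end{equation*}
the last equality using norm-convergence in the finite-dimensional algebra $M_k\otimes M_\ell$. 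Taking the supremum over $\Phi,\Psi$ yields $\norm{\Xi_{\min}(x)}\ge\norm{x}_{\min}$, hence isometry and injectivity. Finally, in the unital case, choosing the constant representing sequences $\phi(1_A)_n=1_C$ and $\psi(1_B)_n=1_D$ gives $\Xi_\alpha(1)=(1_C\otimes 1_D)_n=1_{(C\otimes_\alpha D)_\omega}$, so the resulting embedding is unital.
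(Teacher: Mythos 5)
Your construction of the homomorphism is the same as the paper's (commuting unital copies of $C_\omega$ and $D_\omega$ inside $(C\otimes_\alpha D)_\omega$ via the cross-norm property, then the universal property of $\otimes_{\max}$ combined with $A\otimes_{\max}B\cong A\otimes B$), but your injectivity argument takes a genuinely different route. The paper observes that the kernel is an ideal of $A\otimes B$, that every non-zero ideal contains a non-zero elementary tensor (Kirchberg's slice lemma plus polar decomposition), and that no non-zero elementary tensor can be killed because $\|\cdot\|_\alpha$ is a cross norm: $\lim_\omega\|a_n\|\cdot\|b_n\|=0$ forces $\iota_A(a)=0$ or $\iota_B(b)=0$. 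You instead reduce to $\alpha=\min$ and prove that $\Xi_{\min}$ is isometric by pushing matrix-valued c.c.p.\ maps through the representing sequences (Arveson extension, Choi correspondence, realization of the resulting functional as an ultralimit). Both routes work; yours avoids the slice lemma at the cost of noticeably heavier machinery, and note that isometry is not extra information here, since an injective $*$-homomorphism is automatically isometric. The one step you should tighten is the appeal to ``countable saturation'': saturation of $C^*$-ultrapowers, as usually formulated, concerns types of \emph{elements}, whereas you need that every positive functional on a separable $C^*$-subalgebra of $(M_k\otimes C)_\omega$ is an ultralimit of uniformly bounded positive functionals on $M_k\otimes C$. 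This statement is true and known, but it requires its own argument --- for instance a Hahn--Banach separation in finitely many coordinates, combined with the fact that the spectrum of a self-adjoint element $(y_n)_\omega$ is the ultralimit of the spectra of the $y_n$, followed by diagonalization over a countable dense subset --- or a precise citation; as written, this is the only real gap, and it is a gap of justification rather than of substance.
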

\begin{proof}
Let $\iota_A: A\hookrightarrow C_\omega$ and $\iota_B: B\hookrightarrow D_\omega$ be the given embeddings, and consider
\begin{equation}
\begin{aligned}
    \varphi_C&: C_\omega\to (C\otimes_\alpha D)_\omega: (c_n)_n \mapsto (c_n\otimes 1_D)_n, \\
    \varphi_D&: D_\omega\to (C\otimes_\alpha D)_\omega: (d_n)_n \mapsto (1_C\otimes d_n)_n.   
\end{aligned}
\end{equation}
These are well-defined $*$-homomorphisms with commuting ranges. Indeed, the fact that they are well-defined follows from $\|\cdot\|_\alpha$ being a cross norm \cite[Lemma 3.4.10]{BrownOzawa_book}; the other properties are immediate. By universality of the maximal tensor product and the isomorphism $A\otimes B \cong A\otimes_{\max} B$, the maps $\varphi_C \circ\iota_A$ and $\varphi_D \circ \iota_B$ then induce a $*$-homomorphism
\begin{equation}
    \psi := (\varphi_C \circ\iota_A) \times (\varphi_D \circ \iota_B): A\otimes B \to (C\otimes_\alpha D)_\omega: a\otimes b \mapsto (\iota_A(a)_n\otimes \iota_B(b)_n)_n,
\end{equation}
where $\iota_A(a)_n$ denotes the $n$-th element of a representatitive sequence of $\iota_A(a)$ (and similarly for $\iota_B(b)_n$). We now claim that this map is injective. To see this, note that it suffices to show injectivity on elementary tensors, since any non-zero ideal $I$ in $A \otimes B$ contains a non-zero elementary tensor. Indeed, by Kirchberg's slice lemma \cite[Lemma 4.1.9]{rordam2001classification}, there exists a non-zero $z\in A\otimes B$ such that $z^*z$ is an elementary tensor and $zz^*\in I$. By polar decomposition in $C^*$-algebras, there exists $w\in A\otimes B$ such that $z^* = w(zz^*)^{1/4} \in I$, so $I$ contains the non-zero elementary tensor $z^* z$.

Take $a\in A$ and $b\in B$ such that $\psi(a\otimes b) = 0$, and denote $(a_n)_n := \iota_A(a)$ and $(b_n)_n := \iota_B(b)$. Recalling that $\|\cdot\|_\alpha$ is a cross norm, we then find
\begin{equation}
    0 = \lim_{n\to \omega} \|a_n\otimes b_n\|_\alpha = \lim_{n\to \omega} \|a_n\|\cdot \| b_n\| = \lim_{n\to \omega} \|a_n\|\cdot \lim_{n\to \omega} \| b_n\|.
\end{equation}
Hence, $(a_n)_n$ or $(b_n)_n$ has to tend to zero along $\omega$. This means that $\iota_A(a) = 0$ or $\iota_B(b) = 0$, which implies that $a=0$ or $b=0$. We conclude that $\psi$ is indeed injective.

In the setting where $A$, $B$, $\iota_A$ and $\iota_B$ are all unital, it is clear from the definitions that the same holds for $\varphi_C$ and $\varphi_D$, and therefore also for $\psi$. This finishes the proof.
\end{proof}

We are now ready to prove the main result of this section.

\begin{thm} \label{thm:extensionsembed}
    Let
    \begin{equation}
    \begin{tikzcd}
        \mathfrak e: 0 \arrow[r] & I \arrow[r] & E \arrow[r] & D \arrow[r] & 0
    \end{tikzcd}
    \end{equation}
    be an extension of $C^*$-algebras with $I$ such that $\mathrm{Prim}(I)$ contains no non-empty compact open subsets, $E$ separable and exact, and $D$ simple and nuclear. If $D$ is non-unital and embeds into $\cal Z_\omega$, or if $E$ (and therefore also $D$) is unital and $D$ embeds into $\cal Z_\omega$, then so does $E$. In the latter case, $E$ embeds unitally if $D$ does.
\end{thm}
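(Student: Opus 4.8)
The plan is to realize $E$ as a subalgebra of the total algebra of a split, \emph{absorbing} extension whose total algebra is a tensor product that embeds into $\Z_\omega$ by Proposition \ref{prop: embedding tensor product}. Two inputs drive this. First, since $I$ is an ideal of the separable, exact algebra $E$, it is itself separable and exact, and the hypothesis on $\mathrm{Prim}(I)$ lets me invoke \cite{gabe2020traceless} to obtain an embedding $I\hookrightarrow C_0(0,1]\otimes\mathcal{O}_2$ (item (\ref{embeds in cone}) of Corollary \ref{equivalences}). Second, I am handed $D\hookrightarrow\Z_\omega$, unital when the case hypothesis provides it. The idea is to repackage $I$ inside a \emph{contractible, stable} ideal of the form $J\otimes D$ and to transport the whole extension $\mathfrak e$ there.

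Concretely, I would set $J:=C_0(0,1]\otimes\mathcal{O}_2\otimes\mathcal{K}$, which is $\sigma$-unital, stable, and contractible. Then $J\otimes D\cong C_0(0,1]\otimes(\mathcal{O}_2\otimes\mathcal{K}\otimes D)$, and since $\mathcal{O}_2\otimes\mathcal{K}\otimes D$ is a stable Kirchberg algebra, $\mathcal{O}_2$ embeds into it; composing $I\hookrightarrow C_0(0,1]\otimes\mathcal{O}_2$ with $\mathrm{id}_{C_0(0,1]}\otimes(\mathcal{O}_2\hookrightarrow\mathcal{O}_2\otimes\mathcal{K}\otimes D)$ yields an embedding $\iota\colon I\hookrightarrow J\otimes D$. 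By Lemma \ref{lemma:unitizationpurelylarge} the split extension
\[
\mathfrak f\colon 0\to J\otimes D\to J^\dagger\otimes D\to D\to 0
\]
is purely large, hence unitally absorbing by Elliott--Kucerovsky \cite{Elliott-Kucerovsky_2001}. Moreover $J^\dagger=(C_0(0,1]\otimes\mathcal{O}_2\otimes\mathcal{K})^\dagger$ embeds unitally into $\Z_\omega$, by unitizing the cone embedding of Corollary \ref{cone over separable} and invoking the unital embedding $(C\cQ)^\dagger\hookrightarrow\Z$ from Proposition \ref{cone over Q_omega}; so Proposition \ref{prop: embedding tensor product} gives $J^\dagger\otimes D\hookrightarrow\Z_\omega$ (unitally when $D\hookrightarrow\Z_\omega$ is unital, using $D$ nuclear).

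It then remains to embed $E$ into the total algebra $J^\dagger\otimes D$ of $\mathfrak f$. Pushing $\mathfrak e$ forward along $\iota$ produces an extension $\iota_*\mathfrak e$ of $D$ by $J\otimes D$ whose extension algebra contains $E$. Because $J\otimes D$ is contractible, $\mathrm{Ext}(D,J\otimes D)\cong KK^1(D,J\otimes D)=0$ (with $D$ nuclear, so $\mathrm{Ext}$ is a group), whence $\iota_*\mathfrak e$ is stably trivial; adding the absorbing $\mathfrak f$ yields an absorbing extension of trivial class, so $\iota_*\mathfrak e\oplus\mathfrak f\sim_s\mathfrak f$ and the two have isomorphic extension algebras. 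Chaining the inclusions gives $E\hookrightarrow J^\dagger\otimes D\hookrightarrow\Z_\omega$, and one tracks the unit through this chain in the unital case.

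The main obstacle is the push-forward step: a bare embedding $\iota$ need not extend to multiplier algebras, so to make sense of $\iota_*\mathfrak e$ and of $E\hookrightarrow E_{\iota_*\mathfrak e}$ I would first pass to the hereditary subalgebra generated by $\iota(I)$ --- on which $\iota$ is nondegenerate --- and then use stability (and fullness) of $J\otimes D$ to land the resulting class in $\mathrm{Ext}(D,J\otimes D)$; verifying pure largeness of $\mathfrak f$ is the other technical heart, but that is exactly Lemma \ref{lemma:unitizationpurelylarge}. Finally, the unital/non-unital dichotomy in the hypothesis is what keeps the absorption clean: when $E$ is unital one applies unital absorption and preserves the unit, and when $D$ is non-unital one may unitize $\mathfrak e$ without manufacturing a unital quotient lacking a unital lift --- the excluded configuration ($D$ unital but $E$ non-unital) being precisely where this fails.
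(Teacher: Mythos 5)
Your proposal follows the paper's architecture quite closely: push $\mathfrak e$ forward into a stable, $KK$-trivial ideal, compare the pushforward with the split extension $0\to J\otimes D\to J^\dagger\otimes D\to D\to 0$ of Lemma \ref{lemma:unitizationpurelylarge} via absorption and vanishing of $\mathrm{Ext}$, and embed the total algebra $J^\dagger\otimes D$ into $\Z_\omega$ by Proposition \ref{prop: embedding tensor product}. Several of your ingredients are fine and even mildly streamline the paper's (using the cone structure of $J$ to get $J\hookrightarrow\Z_\omega$ directly from Corollary \ref{cone over separable}; adding the split, purely large $\mathfrak f$ itself to make the pushforward absorbing; deducing $KK$-vanishing from contractibility). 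But the step you label ``the main obstacle'' and then dismiss in half a sentence is a genuine gap, and it is precisely the point the paper's proof is engineered around. Once you replace $\iota\colon I\to J\otimes D$ by the nondegenerate inclusion of $I$ into the hereditary subalgebra $H\subseteq J\otimes D$ generated by $\iota(I)$, the pushforward of $\mathfrak e$ is an extension of $D$ by $H$, not by $J\otimes D$, and ``stability (and fullness) of $J\otimes D$'' does not transport its class into $\mathrm{Ext}(D,J\otimes D)$: hereditary subalgebras of stable algebras need not be stable, and the hereditary inclusion $H\subseteq J\otimes D$ is itself degenerate, so it induces no map $\mathcal{Q}(H)\to\mathcal{Q}(J\otimes D)$. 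Worse, $H$ can genuinely fail to be stable in your setup. Take $I=C_0(0,1]$ (which satisfies the hypothesis on $\mathrm{Prim}$), embedded as $C_0(0,1]\otimes 1_{\mathcal{O}_2}$, and compose with a unital embedding of $\mathcal{O}_2$ into a corner $q(\mathcal{O}_2\otimes\mathcal{K}\otimes D)q$; then $H\cong C_0(0,1]\otimes q(\mathcal{O}_2\otimes\mathcal{K}\otimes D)q$, which has a nonzero unital quotient (evaluation at $1$) and hence is not stable. Without a $\sigma$-unital \emph{stable} ideal, every tool you want to use --- direct sums of extensions, the Elliott--Kucerovsky theorem and its non-unital version \cite{Gabe_nonunitalext}, Kasparov's $\mathrm{Ext}\cong KK^1$, and Lemma \ref{lemma:unitizationpurelylarge} itself --- is unavailable, and $H\not\cong J\otimes D$, so there is nothing to compare $\mathfrak f$ with.

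This is exactly why the paper does not embed $I$ into the cone over $\mathcal{O}_2$ but into R\o rdam's algebra $\mathcal{A}_{[0,1]}$ \cite{Rordam_ASHalgebra}: its ideal lattice is order-isomorphic to $[0,1]$, so the hereditary subalgebra generated by the image of $I$ has a continuum as ideal lattice, hence no nonzero unital quotients, and \cite[Theorem B]{BGSW_2022nuclear} then yields that this hereditary subalgebra is stable (while remaining $\mathcal{O}_2$-stable). Your ambient algebra $C_0(0,1]\otimes\mathcal{O}_2\otimes\mathcal{K}$ has the wrong ideal structure for this argument, since its quotient at the endpoint already admits unital corners; a repair would require choosing the embedding of $I$ so that no quotient of $I$ by a restricted ambient ideal is unital (e.g.\ by a smearing trick as in Corollary \ref{cone over separable}) and then invoking a stability criterion such as \cite[Theorem B]{BGSW_2022nuclear} --- none of which is in your proposal. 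A second, smaller gap: in the unital case, vanishing of $\mathrm{Ext}(D,J)$ plus unital absorption does not by itself give strong unitary equivalence of the two unital extensions; one needs the unital invariant $\mathrm{Ext}_{us}(D,J)$ to vanish, which the paper deduces from $\mathrm{Ext}(D,J)=0$ \emph{and} $K_0(J)=0$ via \cite[Theorem 3]{ManuilovThomsen}. Your ideal also has trivial $K$-theory, so this is fixable, but the step must be addressed.
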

\begin{proof}
As $I$ is separable and exact, it follows from \cite[Theorem A]{gabe2020traceless} that $I$ embeds into R\o rdam's ASH-algebra $\cal A_{[0,1]}$ (see \cite{Rordam_ASHalgebra}). Letting $J$ be the hereditary $C^*$-subalgebra of $\cal A_{[0,1]}$ generated by the image of $I$, we obtain a non-degenerate inclusion $\iota: I \hookrightarrow J$. Now, $\cal A_{[0,1]}$ is nuclear and $\cal O_2$-stable by \cite[Proposition 6.1]{KirchbergRordam2005purely} and its ideal lattice is order isomorphic to $[0,1]$ by \cite[Proposition 2.1]{Rordam_ASHalgebra}, so it follows from \cite[Theorem B]{BGSW_2022nuclear} that $J$ is stable. Moreover, $\cal O_2$-stability of $\cal A_{[0,1]}$ passes to $J$.

Since the inclusion $\iota$ is non-degenerate, it induces a pushforward extension $\mathfrak e_0$
\begin{equation}
\begin{tikzcd}
    \mathfrak e: 0 \arrow[r] & I \arrow[r] \arrow[d,"\iota"] & E \arrow[r] \arrow[d] & D \arrow[r] \arrow[d,equals] & 0 \\
    \mathfrak e_0: 0 \arrow[r] & J \arrow[r] & E_0 \arrow[r] & D \arrow[r] & 0
\end{tikzcd}
\end{equation}
where the map $E\to E_0$ is injective due to $\iota$ being injective. By the previous lemma, we know that the extension
\begin{equation}
\begin{tikzcd}
    \mathfrak f: 0 \arrow[r] & J \otimes D \arrow[r] & J^\dagger \otimes D \arrow[r] & D \arrow[r] & 0
\end{tikzcd}
\end{equation}
is purely large. Moreover, it follows from Corollary \ref{equivalences} and Proposition \ref{prop: embedding tensor product} that ${J^\dagger \otimes D}$ embeds into $(\Z\otimes\Z)_\omega \cong \Z_\omega$, as $\Z$ is strongly self-absorbing \cite[Theorem~7.6 and 8.8]{JiangSu1999}. Since $J$ is stable and $\cal O_2$-stable and $D$ is simple and nuclear, the $\cal O_2$-absorption theorem \cite{kirchberg2000embedding} implies that $J\otimes D \cong J$, so both $\mathfrak e_0$ and $\mathfrak f$ are extensions of $D$ by $J$. The goal is now to show that $\mathfrak e_0$ and $\mathfrak f$ are strongly unitarily equivalent. This will imply that the extension algebras $E_0$ and $J^\dagger \otimes D$ are isomorphic, giving the desired embedding of $E$ into $\Z_\omega$.

First assume $D$ is non-unital. As $\mathfrak f$ is purely large, it is absorbing by \cite[Theorem 2.1]{Gabe_nonunitalext}. Moreover, we can assume $\mathfrak e_0$ is absorbing without loss of generality. Indeed, taking a split, absorbing extension $\mathfrak f_0$ of $D$ by $J$, the sum $\mathfrak e_0 \oplus \mathfrak f_0$ is absorbing, and its extension algebra is given by
\begin{equation}
    M = \{ (m, d) \in \cal M(J) \oplus D \mid \pi(m) = \tau_e(d)\oplus\tau_f(d)\},
\end{equation}
where $\pi: \cal M(J) \to \cal Q(J)$ is the quotient map, $\tau_e, \tau_f: D \to \cal Q(J)$ are the Busby maps corresponding to $\mathfrak e_0$ resp.\ $\mathfrak f_0$ and $\tau_e(d)\oplus\tau_f(d)$ denotes the direct sum of $\tau_e(d)$ and $\tau_f(d)$ in $\cal Q(J)$ induced by the direct sum in $\cal M(J)$. We can then embed $E_0$ into $M$ by noting that the $*$-homomorphism
\begin{equation}
    E_0 \to \cal M(J) \oplus D: x \mapsto (\sigma_e(x)\oplus(\sigma_f \circ \phi \circ q)(x), q(x))
\end{equation}
is injective and maps into $M$, where $q: E_0\to D$ is the quotient map, $\phi:D\to E_0$ is a splitting of $\mathfrak f_0$ and $\sigma_e, \sigma_f: E_0\to \cal M(J)$ are the canonical maps descending to $\tau_e$ resp.\ $\tau_f$. This means that it suffices to embed $M$ into $\Z_\omega$, so we can replace $\mathfrak e_0$ by the absorbing extension $\mathfrak e_0 \oplus \mathfrak f_0$.

In summary, both $\mathfrak e_0$ and $\mathfrak f$ are absorbing extensions in $\mathrm{Ext}(D,J)$. As $\mathrm{Ext}(D,J) \cong KK^1(D,J) = 0$ by \cite[\S 7, Theorem 1]{Kasparov} and the fact that $\cal O_2$ is $KK$-contractible, $\mathfrak e_0$ and $\mathfrak f$ must be strongly unitarily equivalent, as desired.

Now suppose $E$ is unital. It then follows from \cite{Elliott-Kucerovsky_2001} that $\mathfrak f$ is a unitally absorbing extension. By using the same trick as in the non-unital case (instead adding a unital, unitally absorbing extension to $\mathfrak e_0$), we can also arrange that $\mathfrak e_0$ is a unital, unitally absorbing extension. Furthermore, due to $\mathrm{Ext}(D,J)$ and $K_0(J)$ being trivial, it follows from \cite[Theorem 3]{ManuilovThomsen} that $\mathrm{Ext}_{us}(D,J)$ is also trivial. Hence, $\mathfrak e_0$ and $\mathfrak f$ must be strongly unitarily equivalent.

For the final part of the statement, note that the embedding $E \to E_0$ is unital whenever $E$ is unital. If the embedding of $D$ into $\Z_\omega$ is unital, then the same holds for the embedding $J^\dagger\otimes D\hookrightarrow \Z_\omega$, as mentioned in Proposition \ref{prop: embedding tensor product}. Therefore, the embedding $E\hookrightarrow \Z_\omega$ constructed above is indeed unital.
\end{proof}

\begin{remark}
    The unitality/non-unitality assumption in the statement is necessary, as non-unital extensions of unital, $\Z_\omega$-embeddable $C^*$-algebras are not $\Z_\omega$-embeddable in general. For example, consider the extension
    \begin{equation}
    \begin{tikzcd}
        \mathfrak e: 0 \arrow[r] & CM_2 \arrow[r] & E \arrow[r] & \mathbb C \arrow[r] & 0,
    \end{tikzcd}
    \end{equation}
    where
    \begin{equation}
        E := \{ f \in C([0,1],M_2) \mid f(0) \in \mathbb C \oplus0 \},
    \end{equation}
    and the quotient map is evaluation at zero. This extension satisfies all the required conditions aside from (non-)unitality. However, $E$ contains several different non-zero projections, such as the elements defined by $f(t) = 1\oplus 0$ ($t\in [0,1]$) and
    \begin{equation}
        g(t) = \begin{pmatrix} 1-t & \sqrt{t(1-t)} \\ \sqrt{t(1-t)} & t
        \end{pmatrix} \quad (t\in [0,1]).
    \end{equation}
    This means that $E$ cannot embed into $\Z_\omega$, as the latter is projectionless.
\end{remark}

\begin{cor}
    Let $E$ be a continuous field of $C^*$-algebras over a connected, compact Hausdorff space $X$. If $E$ is separable and exact, and one of the fibers is simple, nuclear and $\Z_\omega$-embeddable, then $E$ is $\Z_\omega$-embeddable.
\end{cor}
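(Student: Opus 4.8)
The plan is to exhibit $E$ as the middle term of an extension to which Theorem~\ref{thm:extensionsembed} applies. Fix a point $x_0\in X$ whose fibre $D:=E_{x_0}$ is simple, nuclear and $\Z_\omega$-embeddable, and let $I:=\{f\in E : f(x_0)=0\}$ be the ideal of sections vanishing at $x_0$. Evaluation at $x_0$ is a surjection $E\to D$ with kernel $I$, giving the extension
\[
0\longrightarrow I\longrightarrow E\longrightarrow D\longrightarrow 0 .
\]
By hypothesis $E$ is separable and exact, while $D$ is simple, nuclear and $\Z_\omega$-embeddable; these are exactly the conditions on the outer and quotient algebras required by Theorem~\ref{thm:extensionsembed}. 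Thus the whole statement reduces to checking the one remaining hypothesis, namely that $\mathrm{Prim}(I)$ contains no non-empty compact open subset, after which Theorem~\ref{thm:extensionsembed} delivers the embedding $E\hookrightarrow\Z_\omega$ (unital when $E$ is unital and $D$ embeds unitally).

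The heart of the argument is this claim about $\mathrm{Prim}(I)$, and it is precisely here that continuity of the field and connectedness of $X$ are used. Regarding $E$ as a $C(X)$-algebra, there is a canonical continuous base map $\phi\colon\mathrm{Prim}(E)\to X$, and the hypothesis that the field is continuous (rather than merely upper semicontinuous) is equivalent to $\phi$ being an open map. Under the standard identification of $\mathrm{Prim}(I)$ with an open subset of $\mathrm{Prim}(E)$, the primitive ideals containing $I$ are exactly those lying over $x_0$, so $\mathrm{Prim}(I)=\phi^{-1}(X\setminus\{x_0\})$. Suppose now, towards a contradiction, that $V\subseteq\mathrm{Prim}(I)$ is a non-empty compact open subset. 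Since $\mathrm{Prim}(I)$ is open in $\mathrm{Prim}(E)$, the set $V$ is open in $\mathrm{Prim}(E)$, so $\phi(V)$ is open in $X$ by openness of $\phi$; being the continuous image of the compact set $V$ it is also compact, hence closed in the Hausdorff space $X$. Therefore $\phi(V)$ is a non-empty clopen subset of $X$ avoiding $x_0$, which is impossible since $X$ is connected. This shows that $\mathrm{Prim}(I)$ has no non-empty compact open subset.

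I expect this topological step to be the main obstacle, since the conclusion genuinely fails for fields that are only upper semicontinuous, and the argument rests entirely on converting continuity of the field into openness of $\phi$ and then using compactness to manufacture a proper clopen subset of the connected space $X$. The only other point to keep in mind is that Theorem~\ref{thm:extensionsembed} carries a dichotomy between the case where $D$ is non-unital and the case where $E$ is unital; this dichotomy cannot be dropped, as the extension in the following Remark shows that a non-unital field with a unital $\Z_\omega$-embeddable fibre may fail to be $\Z_\omega$-embeddable. With the statement about $\mathrm{Prim}(I)$ established, applying Theorem~\ref{thm:extensionsembed} in the relevant case concludes the proof.
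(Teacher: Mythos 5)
Your proposal is correct and takes exactly the paper's route: form the evaluation extension $0 \to I \to E \to E_{x_0} \to 0$ at the distinguished fiber and apply Theorem~\ref{thm:extensionsembed}. In fact you supply more than the paper, whose proof simply asserts that the hypotheses hold ``as $X$ is compact and connected'': your verification that $\mathrm{Prim}(I)$ has no non-empty compact open subsets (continuity of the field gives openness of the base map $\phi\colon \mathrm{Prim}(E)\to X$, so a compact open $V\subseteq \mathrm{Prim}(I)$ would yield a proper non-empty clopen subset $\phi(V)$ of the connected space $X$) is precisely the step the paper leaves implicit, and your remark that the unitality dichotomy of Theorem~\ref{thm:extensionsembed} cannot be dropped is also well taken --- the corollary as literally stated omits it, although Theorem~B of the introduction includes it, and the paper's own subsequent remark (the field $\{f\in C([0,1],M_2): f(0)\in\mathbb{C}\oplus 0\}$ with fiber $\mathbb{C}$ at $0$) shows it is genuinely needed.
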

\begin{proof}
Suppose the fiber $E_x$ at $x\in X$ is simple, nuclear and $\Z_\omega$-embeddable. As $X$ is compact and connected, the short exact sequence
\begin{equation}
\begin{tikzcd}
    \mathfrak e: 0 \arrow[r] & I \arrow[r] & E \arrow[r] & E_x \arrow[r] & 0
\end{tikzcd}
\end{equation}
satisfies the required conditions for the previous theorem.
\end{proof}

\begin{defn}
    Let $A$ and $B$ be $C^*$-algebras. We say $A$ is \emph{homotopically dominated by $B$} if there exist $*$-homomorphisms $\varphi:A\to B$ and $\psi: B\to A$ such that $\psi\circ \varphi \sim_h \id_A$. We say $A$ is \emph{homotopy equivalent to $B$} if moreover $\varphi\circ \psi \sim_h \id_B$.
\end{defn}

\begin{cor}
    Let $A$ and $B$ be separable $C^*$-algebras with $A$ exact and $B$ simple and nuclear. If $A$ is homotopically dominated by $B$ and $B$ embeds into $\Z_\omega$, then so does $A$. In particular, if a separable, exact $C^*$-algebra $A$ is homotopy equivalent to $\Z$, then it embeds into $\Z_\omega$.
\end{cor}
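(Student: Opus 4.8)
The plan is to realize $A$ as a subalgebra of a mapping cylinder whose associated extension has quotient $B$ and a cone as its kernel, and then to invoke Theorem \ref{thm:extensionsembed}.

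First I would unpack the hypothesis: homotopy domination provides $*$-homomorphisms $\varphi: A\to B$ and $\psi: B\to A$ together with a homotopy $\psi\circ\varphi\sim_h\id_A$, which I record as a single $*$-homomorphism $H: A\to C([0,1],A)$ with $\mathrm{ev}_0\circ H=\id_A$ and $\mathrm{ev}_1\circ H=\psi\circ\varphi$. I then consider the mapping cylinder of $\psi$,
\begin{equation*}
    \mathrm{Cyl}(\psi):=\{(b,f)\in B\oplus C([0,1],A)\mid f(1)=\psi(b)\},
\end{equation*}
which sits in a short exact sequence
\begin{equation*}
\begin{tikzcd}
    0 \arrow[r] & CA \arrow[r] & \mathrm{Cyl}(\psi) \arrow[r, "\pi"] & B \arrow[r] & 0,
\end{tikzcd}
\end{equation*}
where $\pi(b,f)=b$ and the kernel $\{(0,f)\mid f(1)=0\}\cong C_0([0,1),A)\cong CA$ is the cone over $A$. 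The decisive point is that the homotopy $H$ furnishes an embedding of $A$ into the middle term: the map
\begin{equation*}
    \Lambda: A\to\mathrm{Cyl}(\psi),\qquad \Lambda(a)=\bigl(\varphi(a),\,H(a)\bigr),
\end{equation*}
is a well-defined $*$-homomorphism because $H(a)(1)=\psi\varphi(a)=\psi(\varphi(a))$, and it is injective because $H(a)(0)=a$. This is exactly where homotopy domination enters, and choosing the cylinder of $\psi$ (rather than of $\varphi$) is what makes the kernel a cone.

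Next I would check the hypotheses of Theorem \ref{thm:extensionsembed} for this extension. The quotient $D=B$ is simple, nuclear and $\Z_\omega$-embeddable by assumption, and $\mathrm{Cyl}(\psi)$ is a subalgebra of $B\oplus C([0,1],A)$, hence separable and exact since $A$ is exact and $B$ is nuclear. Crucially, the kernel $CA$ is a cone, so $\mathrm{Prim}(CA)\cong(0,1]\times\mathrm{Prim}(A)$: a non-empty compact open subset would project, along the open continuous projection to $(0,1]$, onto a non-empty compact open subset of $(0,1]$, of which there are none. Thus $\mathrm{Prim}(CA)$ has no non-empty compact open subsets. When $B$ is non-unital, Theorem \ref{thm:extensionsembed} applies directly and yields $\mathrm{Cyl}(\psi)\hookrightarrow\Z_\omega$; composing with $\Lambda$ embeds $A$.

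The remaining point, and the one requiring the most care, is the unital case of Theorem \ref{thm:extensionsembed}, which demands that the middle term be unital. When $B$ is unital I would run the same construction with the unitization $A^\sim$ in place of $A$ and the unital extension $\psi^\sim: B\to A^\sim$ of $\psi$: the resulting cylinder is then unital, its kernel is the cone $CA^\sim$ (still with the required primitive-ideal-space property, and $A^\sim$ is still exact), and $\Lambda$ continues to embed $A$ into it since $H$ takes values in $A$. Theorem \ref{thm:extensionsembed} then gives $A\hookrightarrow\Z_\omega$; moreover, if $A$ is unital and the homotopy data can be taken unital (so that $\Lambda$ is unital and $B$ embeds unitally), the resulting embedding of $A$ is unital as well. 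Finally, the special case follows by taking $B=\Z$, which is simple, nuclear and embeds unitally into $\Z_\omega$: a homotopy equivalence $A\simeq_h\Z$ is in particular a homotopy domination, so $A\hookrightarrow\Z_\omega$. I expect the main obstacle to be precisely this bookkeeping around (non-)unitality needed to land in the correct case of Theorem \ref{thm:extensionsembed}, rather than any of the structural verifications, which become routine once the cylinder of $\psi$ is used so that the kernel is a cone.
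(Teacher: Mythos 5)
Your construction coincides with the paper's: the paper forms the same mapping cylinder $\{(f,b)\in C([0,1],A)\oplus B\mid f(1)=\psi(b)\}$, embeds $A$ into it by $\Phi\oplus\varphi$ (your $\Lambda$), and feeds the extension $0\to CA\to \mathrm{Cyl}(\psi)\to B\to 0$ into Theorem \ref{thm:extensionsembed} --- phrased there as an appeal to the continuous-field corollary, the cylinder being viewed as a field over $[0,1]$ whose fiber at $1$ is $B$. The one place you depart from the paper is that you insist on checking the unitality dichotomy in Theorem \ref{thm:extensionsembed}, which the paper's write-up silently drops: its continuous-field corollary, as stated in the body, omits the hypothesis ``the fiber is non-unital or $E$ is unital'' that appears in Theorem B of the introduction. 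Your instinct is correct: when $B$ is unital, Theorem \ref{thm:extensionsembed} needs the extension algebra to be unital, and the cylinder is unital only if $A$ and $\psi$ are.

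However, your repair of the unital case does not work, and in fact no repair can. First, ``the unital extension $\psi^\sim\colon B\to A^\sim$ of $\psi$'' does not exist: unitization produces a unital map out of the unitized \emph{domain}, $B^\sim\to A^\sim$, whose restriction to $B$ is just $\psi$ (landing in $A$, hence not unital), while a genuinely unital $\psi'\colon B\to A^\sim$ compatible with $\Lambda$ (you need $H(a)(1)=\psi(\varphi(a))=\psi'(\varphi(a))$ for all $a\in A$) must agree with $\psi$ on $C^*(\varphi(A))$, which is impossible e.g.\ whenever $\varphi$ is surjective, since $\psi(1_B)\in A$ cannot equal $1_{A^\sim}$; and replacing the quotient by $B^\sim$ instead destroys simplicity of the quotient, which Theorem \ref{thm:extensionsembed} also requires. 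Second, and decisively, the statement is false without additional unitality hypotheses, as the paper's own remark following Theorem \ref{thm:extensionsembed} inadvertently shows: the algebra $E_0=\{f\in C([0,1],M_2)\mid f(0)\in\C\oplus 0\}$ appearing there is homotopy equivalent to $\C$ (take $\varphi=\mathrm{ev}_0$, $\psi(\lambda)=\lambda p$ with $p$ the constant function $\mathrm{diag}(1,0)$, and the homotopy $\Phi_s(f)(t)=f(st)$), and $\C$ is separable, simple, nuclear, and unitally $\Z_\omega$-embeddable; yet $E_0$ contains distinct non-zero projections and so cannot embed into the projectionless $\Z_\omega$. Thus in the unital case one must assume in addition that $A$ and $\psi$ are unital, whereupon the cylinder is unital and both your argument and the paper's go through; for the special case of homotopy equivalence with $\Z$ this costs only the assumption that $A$ is unital, since a homotopy $\psi\varphi\sim_h\id_A$ yields a norm-continuous path of projections from $1_A$ to $\psi\varphi(1_A)$, forcing $\varphi$ and $\psi$ to be unital. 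In short: you located a genuine gap --- one the paper's own proof shares --- but your patch fails, and the statement itself needs amending rather than a cleverer proof.
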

\begin{proof}
Let $\varphi:A\to B$ and $\psi: B\to A$ be $*$-homomorphisms such that $\psi\circ \varphi \sim_h \id_A$; say $\Phi: A\to C([0,1],A)$ is a $*$-homomorphism such that $\Phi_0 = \id_A$ and $\Phi_1 = \psi\circ \varphi$. The mapping cone of $\psi$,
\begin{equation}
    E := \{ (f,b) \in C([0,1],A)\oplus B \mid f(1) = \psi(b)\},
\end{equation}
is a separable and exact continuous field of $C^*$-algebras over $[0,1]$. Moreover, the fiber at $1$ is isomorphic to $B$, so the previous corollary applies and $E$ embeds into $\Z_\omega$.

Now, we can embed $A$ into $E$ through the $*$-homomorphism $\Phi\oplus\varphi$. Note that it maps into $E$ as $\Phi(a)(1) = \psi\circ\varphi(a)$ ($a\in A$) and is injective because $\Phi(a)(0) = a$ ($a\in A$). Hence, we can conclude that $A$ also embeds into $\Z_\omega$.
\end{proof}

\printbibliography

\section*{Open Access and Data Statement}
For the purpose of Open Access, the authors have applied a CC BY public copyright licence to any Author Accepted Manuscript (AAM) version arising from this submission.

Data Access Statement: Data sharing is not applicable to this article as no new data were created or analysed in this work.
\end{document}